\let\frak\mathfrak
\def\>{\relax\ifmmode\mskip.666667\thinmuskip\relax\else\kern.111111em\fi}
\def\<{\relax\ifmmode\mskip-.333333\thinmuskip\relax\else\kern-.0555556em\fi}
\def\vsk#1>{\vskip#1\baselineskip}
\def\vv#1>{\vadjust{\vsk#1>}\ignorespaces}
\def\vvn#1>{\vadjust{\nobreak\vsk#1>\nobreak}\ignorespaces}
  \let\ssize\scriptstyle
\let\sssize\scriptscriptstyle
\let\Medskip\medskip
\def\medskip{\par\Medskip}
\let\Bigskip\bigskip
\def\bigskip{\par\Bigskip}
\let\Maketitle\maketitle
\def\maketitle{\Maketitle\thispagestyle{empty}\let\maketitle\empty}
\newtheorem{thm}{Theorem}[section]
\newtheorem{cor}[thm]{Corollary}
\newtheorem{lem}[thm]{Lemma}
\theoremstyle{definition}                                  
\numberwithin{equation}{section}
\theoremstyle{definition}
\newtheorem*{rem}{Remark}
\let\mc\mathcal
\let\nc\newcommand
\let\la\lambda
\let\phi\varphi
\let\Om\Omega
\let\der\partial
\let\ox\otimes
\let\geq\geqslant
\let\leq\leqslant
\let\on\operatorname
\let\bi\bibitem
\let\bs\boldsymbol
\def\C{{\mathbb C}}
\def\Z{{\mathbb Z}}
\def\F{{\mathbb F}}   
\def\+#1{^{\{#1\}}}
\def\tr{\on{tr}}
\def\Wr{\on{Wr}}
\def\beq{\begin{equation}}
\def\eeq{\end{equation}}
\def\be{\begin{equation*}}
\def\ee{\end{equation*}}
\nc{\bea}{\begin{eqnarray*}}
\nc{\eea}{\end{eqnarray*}}
\nc{\bean}{\begin{eqnarray}}
\nc{\eean}{\end{eqnarray}}
\nc{\Ref}[1]{{\rm(\ref{#1})}}
\let\ga\gamma
\nc{\Il}{{\mc I_{\bs\la}}}
\nc{\bla}{{\bs\la}}
\nc{\Fla}{\F_\bla}
\nc{\tfl}{{T^*\Fla}}
\nc{\GL}{{GL_n(\C)}}
\nc{\GLC}{{GL_n(\C)\times\C^*}}
\let\sd s 
\def\ddk_#1{\kk_{#1}\<\>\frac\der{\der\<\>\kk_{#1}}}
\def\FFF{\mathbb{F}}
\def\bul{\mathbin{\raise.2ex\hbox{$\sssize\bullet$}}}
\def\intt{\mathchoice
{\mathop{\raise.2ex\rlap{$\,\,\ssize\backslash$}{\intop}}\nolimits}
{\mathop{\raise.3ex\rlap{$\,\sssize\backslash$}{\intop}}\nolimits}
{\mathop{\raise.1ex\rlap{$\sssize\>\backslash$}{\intop}}\nolimits}
{\mathop{\rlap{$\sssize\<\>\backslash$}{\intop}}\nolimits}}
\let\kk q 
\let\cc c
\let\Ko K
\def\GZ/{Gelfand-Zetlin}
\def\KZ/{{\slshape KZ\/}}
\def\qKZ/{{\slshape qKZ\/}}
\def\XXX/{{\slshape XXX\/}}
\nc{\A}{{\mc C}}
\def\sll{{\frak{sl}}}
\def\Fpz{{\F_p(z)}}
\def\Fz{{\F_p(z)}}
\def\Fzx{{\F_p(z)[x]}}
\def\Mz{{\mc M_\Fz}}
\nc{\hsl}{\widehat{{\frak{sl}_2}}}
\nc{\BC}{{ \mathbb C}}
\nc{\lra}{\longrightarrow}
\nc{\CO}{{\mathcal{O}}}
\nc{\BZ}{{ \mathbb Z}}
\nc{\hfn}{\hat{\frak{n}}}
\begin{document}

\hrule width0pt
\vsk->

\title[An invariant subbundle   of the KZ connection mod $p$]
{An invariant subbundle   of the KZ connection mod $p$ 
\\
and  reducibility of
$\hsl$ Verma modules mod $p$}

\author[Alexander Varchenko]
{ Alexander Varchenko$^{\star}$}

\maketitle

\begin{center}
{\it Department of Mathematics, University
of North Carolina at Chapel Hill\\ Chapel Hill, NC 27599-3250, USA\/}

\vsk.5>
{\it Faculty of Mathematics and Mechanics, Lomonosov Moscow State
University\\ Leninskiye Gory 1, 119991 Moscow GSP-1, Russia\/}

\end{center}

\vsk>
{\leftskip3pc \rightskip\leftskip \parindent0pt \Small
{\it Key words\/}:  KZ equations,  reduction to
characteristic $p$, arithmetic solutions

\vsk.6>
{\it 2010 Mathematics Subject Classification\/}: 13A35 (33C60, 32G20) 
\par}

{\let\thefootnote\relax
\footnotetext{\vsk-.8>\noindent
$^\star\<$
{\it E\>-mail}:
anv@email.unc.edu\,, supported in part by NSF grant DMS-1665239}}

\begin{abstract}

 We consider the KZ differential equations over $\mathbb C$
in the case, when its multidimensional hypergeometric solutions are 
one-dimensional integrals. We also consider
the same differential equations over a finite field $\F_p$.
We study  the space of polynomial solutions of these differential equations over $\F_p$, 
constructed in a previous work by V.\,Schechtman and the  author.
The module of these polynomial solutions defines an invariant subbundle  of the associated KZ connection modulo $p$.
We describe the algebraic equations for that subbundle and argue that the equations correspond to highest weight vectors
of the associated $\hsl$ Verma modules over the  field $\F_p$.

\end{abstract}

{\small\tableofcontents\par}

\setcounter{footnote}{0}
\renewcommand{\thefootnote}{\arabic{footnote}}

\section{Introduction}

The KZ connection is a flat connection on a trivial complex vector bundle, whose fiber is the tensor
 product  of  Lie algebra modules,
see \cite{KZ, EFK}. The connection has an important invariant subbundle of conformal blocks defined in conformal field theory.
 The algebraic equations 
for this subbundle are useful for applications. The algebraic equations were described in \cite{FSV1, FSV2, FSV3}.
In \cite{SV1, SV2} flat sections of the KZ connection were constructed in the form of multidimensional 
hypergeometric integrals. It was shown in \cite{FSV1, FSV2, FSV3} that these hypergeometric flat sections 
always are sections of the subbundle of conformal blocks.

Recently in \cite{SV4} the KZ connection was considered over a finite field $\F_p$ and 
polynomial flat sections of the KZ connection were constructed
as $p$-analogs of the hypergeometric flat sections of the KZ connection over the field $\C$, 
see also \cite{V3, V4, V5, SliV2}. 
We call these sections the {\it arithmetic flat sections} of the KZ connection over $\F_p$.

The problem is to 
describe the algebraic equations for the subbundle spanned by the arithmetic flat sections.
We consider the
 particular case of the KZ connection in
 which the hypergeometric flat sections over the field $\C$
are given by one-dimensional integrals. We identify the annihilator of the subbundle of arithmetic flat sections with 
a certain space of rational functions in one variable $x$ over the field $\F_p(z)$. 
That space is related to the De Rham complex 
of rational differential forms on the curve of genus zero with punctures. That De Rham complex 
over the field $\C$ was studied in \cite{SV3}  to relate it to the highest weight vectors in reduced Verma modules
over the complex affine Lie algebra $\hsl$.

In the end of the paper we argue that the annihilator of the module of arithmetic 
flat sections is related to highest weight vectors in the Verma modules of $\hsl$ considered over the field $\F_p$.

\vsk.2>
The paper is organized as follows. In Section  \ref{sec KZ} we describe our example  of the KZ differential equations,
 construct its solutions over $\C$ and $\F_p$. In Section \ref{sec MAS}
 we discuss the annihilator of the module of arithmetic flat sections and identify it with a certain space of rational functions in 
one variable $x$. In Section \ref{sec sp el} we describe a set of generators in the annihilator, which gives us
a set of explicit algebraic equations satisfied by the arithmetic flat sections.
In Section \ref{sec ar and rep} we argue that the algebraic  equations for arithmetic  flat sections are related to 
highest weight vectors in the Verma modules of the affine Lie algebra $\hsl$ considered over $\F_p$.

\vsk.2>
The author thanks Vadim Schechtman and Alexey Slinkin for useful discussions.

\section{KZ equations}
\label{sec KZ}

\subsection{Description of equations} 
\label{sec DE} 

In this paper the numbers $p$, $q$ {\it are prime numbers,
$n$ a positive integer, $p>n$,  $n=kq+1$ for some} $k\in\Z_{>0}$.
 We  study the following system of equations
for a  column vector  $I(z)=(I_1(z)$, \dots, $I_{n}(z))$\,:
\bean
\label{KZ}
\phantom{aaa}
 \frac{\partial I}{\partial z_i} \ = \
   {\frac 1q} \sum_{j \ne i}
   \frac{\Omega_{ij}}{z_i - z_j}  I ,
\quad i = 1, \dots , n,
\qquad
I_1(z)+\dots+I_{n}(z)=0,
\eean
where $z=(z_1,\dots, z_{n})$,  
\bea
 \Omega_{ij} \ = \ 
\begin{pmatrix}
             & \vdots^i &  & \vdots^j &  
\\
        {\scriptstyle i} \cdots & {-1} & \cdots &            1   & \cdots 
\\
                   & \vdots &  & \vdots &  
\\
        {\scriptstyle j} \cdots & 1 & \cdots & {- 1}&
                 \cdots 
\\
                   & \vdots &  & \vdots &
                   \end{pmatrix} ,
\eea
and all other entries of $\Om_{ij}$ are zero. This
joint system of {\it differential and 
algebraic equations} is called the {\it system of KZ  equations} in this paper.

\vsk.2>

System  \Ref{KZ} is the system of the  classical KZ differential equations 
associated with the Lie algebra $\sll_2$ and the subspace of highest weight vectors of weight $n-2$ of the tensor 
product
$V^{\ox n}$\,,
where $V$ is the irreducible two-dimensional  $\sll_2$-module, up to a gauge transformation, see 
this example in  \cite[Section 1.1]{V2}. See also \cite{V1, SliV2}.

\vsk.2>
We consider system \Ref{KZ} over the field $\C$ and the field $\F_p$ with $p$ elements.

\subsection{Solutions  over $\C$} 
All solutions of \Ref{KZ} have the form
\bean
\label{hg sol}
I^{(\ga)}(z) =\left( \int_\ga \frac{\Phi_\C(x,z)}{x-z_1}\,dx,\ \dots\ , \int_\ga \frac{\Phi_\C(x,z)}{x-z_n}\,dx
\right),
\eean
where
\bean
\label{mast f}
\Phi_\C(x,z) = \prod_{a=1}^{n}(x-z_a)^{-1/q}
\eean
and  $\ga$
is an element  of the first homology group
 of the algebraic curve with affine equation
\bea
y^q = (x-z_1)\dots (x-z_{n})\,.
\eea
Starting from such $\ga$,  chosen for given $\{z_1,\dots,z_{n}\} \subset \C$,
 the vector $I^{(\ga)}(z)$ can be analytically continued,
 as a  multivalued holomorphic function of $z$, 
to the complement in $\C^n$ to the union of the
diagonal  hyperplanes $z_i=z_j$. 

The complex vector space of such integral solutions is the  $n-1$-dimensional vector space of all solutions of
system \Ref{KZ}. 
See these statements in the example in   \cite[Section 1.1]{V2}, also in  \cite{SliV2}.

\subsection{Solutions  over $\F_p$} Polynomial solutions of the general 
KZ differential equations over $\F_p$ were constructed in \cite{SV4}. 
The particular case of  system \Ref{KZ} was studied in \cite{SliV2}.
For $q=2$ system \Ref{KZ} was studied in \cite{V5}.

\vsk.2>
Denote by $a$ the unique integer such that 
\bean
\label{a_1}
1\leq a < q\quad \on{and}\quad  q\,\vert \,(ap-1)\,.
\eean
Denote
\bean
\label{M_i}
M = (ap-1)/q\,\in \Z_{>0}\,.
\eean
Let
\bean
\label{mp red}
\Phi(x,z) := \prod_{i=1}^n(x-z_i)^{M},
\eean
\bean
\label{P}
P(x,z) \,:=\,\Big(\frac {\Phi(x,z)}{x-z_1}, \dots,\frac {\Phi(x,z)}{x-z_n}\Big)\,=\, \sum_i P^{i}(z) \,x^i \,,
\eean
where $P^{i}(z)$ are $n$-vectors of polynomials in $z_1,\dots,z_n$ with coefficients in $\FFF_p$.

\begin{thm}[{\cite[Theorem 1.2]{SV4}}] \label{thm Fp} 
For any positive integer $l$, the vector of polynomials
\\
 $P^{lp-1}(z)$ 
is a solution of system \Ref{KZ}.

\end{thm}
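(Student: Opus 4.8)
The plan is to follow the same strategy that works over $\C$: verify directly that the vector-valued function $P(x,z)$ defined in \Ref{P} satisfies, as a function of $z$ with $x$ treated as a parameter, a version of the KZ system with an extra total-derivative term in $x$, and then extract the coefficient of a fixed power $x^{lp-1}$ and use the fact that $d/dx$ kills the coefficient of $x^{lp-1}$ modulo $p$ (since $\frac{d}{dx}x^{lp} = lp\,x^{lp-1} = 0$ in $\F_p$, and more relevantly the coefficient of $x^{lp-1}$ in any $\frac{d}{dx}(\text{polynomial})$ vanishes). First I would record the elementary identities for $\Phi(x,z) = \prod_i (x-z_i)^M$, namely
\[
\frac{\partial \Phi}{\partial z_i} = \frac{-M\,\Phi}{x-z_i}, \qquad
\frac{\partial \Phi}{\partial x} = \sum_i \frac{M\,\Phi}{x-z_i},
\]
valid in $\F_p[z][x]$ after clearing denominators (the denominators $x-z_i$ divide $\Phi$). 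These are the mod-$p$ shadows of the corresponding identities for $\Phi_\C$, with the exponent $-1/q$ replaced by $M$; the key arithmetic input is \Ref{a_1}–\Ref{M_i}, i.e. $qM = ap-1 \equiv -1 \pmod p$, so that $M$ plays the role of $-1/q$ modulo $p$.

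Next I would differentiate the $j$-th component $P_j(x,z) = \Phi(x,z)/(x-z_j)$ with respect to $z_i$. For $i\ne j$ one gets, using the first identity above,
\[
\frac{\partial}{\partial z_i}\frac{\Phi}{x-z_j}
= \frac{-M}{x-z_i}\cdot\frac{\Phi}{x-z_j},
\]
while for $i=j$ there is an additional term from differentiating $1/(x-z_j)$. The standard manipulation is to write $\frac{1}{(x-z_i)(x-z_j)} = \frac{1}{z_i-z_j}\big(\frac{1}{x-z_i}-\frac{1}{x-z_j}\big)$, which converts these expressions into the combination $\frac1q\sum_{j\ne i}\frac{\Omega_{ij}}{z_i-z_j}P$ appearing on the right-hand side of \Ref{KZ} (here the $-M$ is converted to $\frac1q$ precisely because $qM\equiv -1$), plus a correction term that is a total $x$-derivative: one finds
\[
\frac{\partial P}{\partial z_i} \;=\; \frac1q\sum_{j\ne i}\frac{\Omega_{ij}}{z_i-z_j}\,P
\;+\;\frac{\partial}{\partial x}\,R_i(x,z)
\]
for suitable vectors of polynomials $R_i(x,z)\in\F_p[z][x]^n$, where the last identity above is exactly the mechanism producing the $\partial/\partial x$ term. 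I would also check the algebraic constraint: $\sum_j P_j(x,z) = \Phi(x,z)\sum_j\frac{1}{x-z_j} = \frac{\partial\Phi}{\partial x}/M$ is itself $\frac1M$ times a total $x$-derivative, so its coefficient of $x^{lp-1}$ vanishes mod $p$ — giving $\sum_j P_j^{lp-1}(z)=0$, which is the second (algebraic) equation of \Ref{KZ}.

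Finally, I would take the coefficient of $x^{lp-1}$ on both sides of the displayed identity for $\partial P/\partial z_i$. The left side gives $\partial P^{lp-1}/\partial z_i$; the first term on the right gives $\frac1q\sum_{j\ne i}\frac{\Omega_{ij}}{z_i-z_j}P^{lp-1}$; and the coefficient of $x^{lp-1}$ in $\frac{\partial}{\partial x}R_i(x,z)$ is $lp$ times the coefficient of $x^{lp}$ in $R_i$, hence $0$ in $\F_p$. This yields \Ref{KZ} for $P^{lp-1}(z)$ and completes the proof. The main obstacle is purely bookkeeping: carefully tracking the partial-fraction rearrangement so that the $z_i$-derivatives organize into the $\Omega_{ij}/(z_i-z_j)$ form with the correct scalar $\frac1q$ while isolating the leftover as a genuine $\partial_x$ of a polynomial (not merely a rational function), and making sure no hidden power-of-$p$ subtlety in the multinomial coefficients of $\Phi$ spoils the "coefficient of $x^{lp-1}$ in a derivative is zero" step. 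Since the whole computation is the mod-$p$ reduction of the classical one in \cite[Section 1.1]{V2} and \cite{SliV2}, with $-1/q\leadsto M$ justified by \Ref{a_1}, I expect no genuine difficulty beyond this.
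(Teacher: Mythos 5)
Your proposal is correct and is essentially the proof given in \cite{SV4} (cf.\ also \cite{K}), which the paper itself only cites rather than reproves: one checks that $P(x,z)$ satisfies system \Ref{KZ} up to a total $x$-derivative of a vector of genuine polynomials (the off-diagonal components in fact match exactly because $-qM\equiv 1\pmod p$, and the only leftover, in the $i$-th component of the $z_i$-equation, is $-\frac{\der}{\der x}\big(\Phi(x,z)/(x-z_i)\big)$, a polynomial since $x-z_i$ divides $\Phi$), after which the coefficient of $x^{lp-1}$ in the $x$-derivative of a polynomial is $lp$ times a coefficient and hence vanishes in $\F_p$. The bookkeeping concern you raise is harmless, and the algebraic constraint follows as you say from $\sum_j P_j(x,z)=M^{-1}\,\der_x\Phi(x,z)$ with $M$ invertible mod $p$.
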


Theorem \ref{thm Fp} is a particular case of \cite[Theorem 2.4]{SV4}. Cf. Theorem \ref{thm Fp} 
in \cite{K}. See also \cite{V3, V4, V5, SliV2}. 
\vsk.2>

The solutions  $P^{lp-1}(z)$ 
given by this construction are called  the {\it arithmetic solutions} of system \Ref{KZ}. They are $p$-analogs of the hypergeometric solutions
\Ref{hg sol}, in which the integration over a cycle $\ga$ in the integrals of formula \Ref{hg sol}
 is replaced by taking the coefficient of $x^{lp-1}$ in the Taylor expansion of the integrands.

\vsk.2>
Denote $\F_p[z^p]:=\F_p[z_1^p,\dots,z_{n}^p]$. 
The set of all polynomial solutions of system \Ref{KZ} with coefficients in $\F_p$
is a module over the ring
$\F_p[z^p]$ since equations \Ref{KZ} are linear and
$\frac{\der z_i^p}{\der z_j} =0$ in $\F_p[z]$ for all $i,j$.
\vsk.2>

The $\F_p[z^p]$-module
\bean
\label{Def M_M}
\mathcal{M}\,=\,\Big\{ \sum_{l} c_l(z) P^{lp-1}(z) \ |\ c_l(z)\in\F_p[z^p]\Big\},
\eean
spanned by arithmetic solutions,
 is called the {\it module of arithmetic solutions}.
\vsk.2>

The range for the index $l$ is defined by  the inequalities 
 $0< lp-1\leq nM-1$. This implies that  $l=1,\dots, ak$, see \cite[Lemma 5.1]{SliV2}.

\begin{thm}[{\cite[Theorem 3.2]{SliV2}}]
\label{thm li in}
The $\F_p[z^p]$-module $\mc M$ of arithmetic solutions is free of rank $ak$,
that  is, if \ $\sum_{l=1}^{ak} c_l(z) P^{lp-1}(z) = 0$ for some 
$c_l (z)  \in \mathbb{F}_p [z^p]$, then $c_l (z) = 0$ for all $l$.

\end{thm}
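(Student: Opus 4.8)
The plan is to prove freeness by producing a single specialization of the variables $z$ at which the vectors $P^{lp-1}(z)$, $l=1,\dots,ak$, become linearly independent over $\F_p$ in a suitable quotient, and then to lift that independence back to $\F_p[z^p]$. The natural specialization to try is $z_i\mapsto \zeta^i$ (or any choice that separates the points) for $\zeta$ a primitive element of a field extension $\F_{p^m}$, but in fact it is cleaner to argue generically: work in the field of fractions $K=\F_p(z_1^p,\dots,z_n^p)$ and show that the $ak$ vectors $P^{lp-1}(z)\in K^n$ are linearly independent over $K$. Since $\F_p[z^p]$ is a domain with fraction field $K$, a relation $\sum_l c_l(z)P^{lp-1}(z)=0$ with $c_l\in\F_p[z^p]$ is then forced to have all $c_l=0$, which is exactly the claim.

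First I would recall the bookkeeping already in place: by the range computation cited from \cite[Lemma 5.1]{SliV2} the admissible exponents are precisely $lp-1$ with $l=1,\dots,ak$, and these are the only coefficients of $P(x,z)$ in \eqref{P} lying in the degree window $0<lp-1\le nM-1$. Next I would exploit the structure of $\Phi(x,z)=\prod_i(x-z_i)^M$: its coefficients, and hence the components of each $P^i(z)$, are (up to sign) elementary-symmetric-function combinations of the $z_a$, and crucially each component $P^i_j(z)=[\text{coeff of }x^i]\ \Phi(x,z)/(x-z_j)$ depends polynomially on the $z_a$. The key algebraic input is that the $x$-Frobenius structure forces the relevant coefficients to have a controlled dependence on $z$ modulo $p$-th powers; concretely, I would separate each $P^i_j(z)$ into its "$\F_p[z^p]$-part" and a genuinely non-$p$-th-power part, and show that a hypothetical $K$-linear relation among the $P^{lp-1}$ collapses, after clearing denominators in $\F_p[z^p]$, to a polynomial relation over $\F_p[z]$ whose only monomials in the "non-$p$-th-power directions" are controlled by the distinct exponents $lp-1$. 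Because the exponents $p-1, 2p-1,\dots,akp-1$ are pairwise distinct and all lie below $nM$, the corresponding coefficient vectors cannot cancel unless each scalar is zero.

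Alternatively — and this is probably the more robust route — I would build a pairing (a "residue" or "coefficient-extraction" functional) witnessing independence directly. For a solution coming from extracting the coefficient of $x^{lp-1}$, pair it against extracting the coefficient of $x^{lp-1}$ of a test integrand; one expects a triangular (in $l$) system of values, so the Gram-type matrix $\bigl(\langle P^{lp-1}, x^{l'p-1}\rangle\bigr)_{l,l'}$ is triangular with nonzero diagonal, hence invertible over $K$. Setting this up precisely means: take the natural $\F_p[z^p]$-bilinear form on $\F_p[z][x]$ given by $(f,g)\mapsto$ the coefficient of $x^{nM-1}$ in $fg/\prod_a(x-z_a)$ or a similar top-residue expression adapted to $\Phi$, check it is $\F_p[z^p]$-valued on the relevant pieces, and compute the matrix of values on the $P^{lp-1}$. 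This is where the main work lies.

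The hard part will be this last computation: showing the Gram-type matrix is nondegenerate over $K$ — equivalently, that no nontrivial $\F_p$-combination of the $P^{lp-1}(z)$ vanishes generically — requires genuinely using the defining product structure of $\Phi$ together with the arithmetic constraint $q\mid(ap-1)$ that pins down $M$, and not merely abstract linear algebra. I expect that once the right functional is chosen the matrix is either diagonal or triangular by a degree/support argument on powers of $x$, so the real obstacle is identifying the correct functional and verifying it lands in $\F_p[z^p]$; after that the freeness of $\mathcal M$ of rank $ak$ follows formally from the domain property of $\F_p[z^p]$ as sketched above.
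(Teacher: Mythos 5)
Your reduction of the theorem to the statement that the vectors $P^{lp-1}(z)$, $l=1,\dots,ak$, are linearly independent over the fraction field $\F_p(z^p)$ is correct, but it is the trivial half of the argument, and the independence itself is never established in your proposal. Both routes you offer for it have gaps. In the first route, the claim that ``because the exponents $p-1,2p-1,\dots,akp-1$ are pairwise distinct \dots the corresponding coefficient vectors cannot cancel unless each scalar is zero'' is not an argument: coefficients of distinct powers of $x$ in a fixed polynomial can perfectly well be proportional, so distinctness of the exponents by itself proves nothing --- the entire content of the theorem is that these particular coefficient vectors happen to be independent. In the second route you never actually construct the residue pairing, never verify it is $\F_p[z^p]$-valued, and never compute the Gram matrix; you yourself flag that this ``is where the main work lies.'' So what you have is a plan whose decisive step is missing, not a proof.

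For contrast, the paper obtains the independence not by exhibiting a dual family but by computing the dimension of the annihilator of the span. By Theorem \ref{thm 1}, a covector $(c_1(z),\dots,c_n(z))$ kills every $P^{lp-1}(z)$ if and only if $\sum_j c_j(z)\,\Phi(x,z)/(x-z_j)$ is $\der Q/\der x$ for some polynomial $Q$, because a polynomial over $\F_p$ is a derivative exactly when its coefficients of $x^{lp-1}$ all vanish, and those coefficients are exactly $\sum_j c_j(z)P^{lp-1}_j(z)$. The dimension of $\Om_{\log}\cap\der\big(\F_p(z)[x]_{\leq Mn}\big)$ is then computed explicitly: its preimage under $\der$ is the $(n+1)$-dimensional span of $\Phi(x,z)$ and the polynomials $Q_i(x,z)$ of Lemma \ref{lem Qi} and Theorem \ref{thm sl2}, while the kernel of $\der$ on $\F_p(z)[x]_{\leq Mn}$ is spanned by the $ak+1$ monomials $x^{lp}$ with $0\leq lp\leq Mn$; hence the annihilator has dimension $n+1-(ak+1)=(q-a)k+1$ and the span of the $ak$ vectors $P^{lp-1}(z)$ has dimension exactly $ak$, which is formula \Ref{der dim} and the theorem. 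This is where the product structure of $\Phi$ and the congruence $q\mid(ap-1)$ actually enter; if you want to salvage your pairing approach, you would in effect have to reproduce this computation, since the triangularity you hope for is not automatic.
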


\section{Module of arithmetic solutions}
\label{sec MAS}

\subsection{Auxiliary fields and rings} 

For variables $u=(u_1,\dots,u_r)$, denote  by $\F_p(u) = 
\\
\F_p(u_1,\dots,u_r)$
the field of rational functions in $u_1,\dots,u_r$ with coefficients in $\F_p$.
Denote by $\F_p(u)[x]$ the ring of polynomials in $x$ with coefficients in $\F_p(u)$.
Denote by $\F_p(u^p)\subset \F_p(u)$ the subfield of rational functions in 
$u_1^p,\dots,u_r^p$. 

\subsection{Auxiliary lemma}

\begin{lem}
\label{lem der=0} Let $f(t)\in\F_p(t)$ be a rational function in one variable $t$ such that
\\
$f\rq{}(t):=\frac {d f}{dt}(t)=0$, then $f(t) \in \F_p(t^p)$\,.

\end{lem}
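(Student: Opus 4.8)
The plan is to reduce the rational function to a single polynomial and handle the polynomial case first. Write $f(t) = g(t)/h(t)$ with $g,h\in\F_p[t]$ coprime and $h$ monic. Then $f'(t)=0$ gives $g'(t)h(t)-g(t)h'(t)=0$, so $h(t)\mid g(t)h'(t)$; since $\gcd(g,h)=1$ this forces $h(t)\mid h'(t)$, and comparing degrees (as $\deg h' < \deg h$ unless $h'=0$) yields $h'(t)=0$. Feeding this back, $g'(t)h(t)=0$ in the integral domain $\F_p[t]$, hence $g'(t)=0$. So the problem is reduced to showing: if $g\in\F_p[t]$ has $g'=0$ then $g\in\F_p[t^p]$, and similarly for $h$; then $f=g/h\in\F_p(t^p)$.

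For the polynomial statement, write $g(t)=\sum_{m} c_m t^m$. Then $g'(t)=\sum_m m\,c_m t^{m-1}$, and since the monomials $t^{m-1}$ are linearly independent, $g'=0$ is equivalent to $m\,c_m=0$ in $\F_p$ for every $m$, i.e. $c_m=0$ whenever $p\nmid m$. Thus only the exponents divisible by $p$ survive, and $g(t)=\sum_{j} c_{pj}\,t^{pj}=\sum_j c_{pj}\,(t^p)^j\in\F_p[t^p]$, as desired. Applying this to both $g$ and $h$ gives $g,h\in\F_p[t^p]$, so $f=g/h$ is a ratio of elements of $\F_p[t^p]\subset\F_p(t^p)$, hence $f\in\F_p(t^p)$.

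The only point requiring a little care — and the main "obstacle," though it is mild — is the coprimality bookkeeping: one must justify that the representation $f=g/h$ with $\gcd(g,h)=1$ exists (it does, since $\F_p[t]$ is a PID), and that the step $h\mid g h'$ together with $\gcd(g,h)=1$ gives $h\mid h'$ (Gauss's lemma / Euclid's lemma in the PID $\F_p[t]$). From there the degree comparison $\deg h' \le \deg h - 1 < \deg h$ forces $h'=0$, and the rest is the elementary monomial-coefficient computation above. I do not expect any characteristic-$p$ subtlety beyond the familiar fact that $m\mapsto m\cdot 1$ kills exactly the multiples of $p$; no separability or perfect-field machinery is needed for this one-variable statement.
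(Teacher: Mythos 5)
Your proof is correct, but it takes a different (and shorter) route than the paper. You normalize to a coprime representation $f=g/h$, use Euclid's lemma in the UFD $\F_p[t]$ to get $h\mid h'$ from $g'h=gh'$, force $h'=0$ by degree comparison, deduce $g'=0$, and finish with the elementary observation that a polynomial with vanishing derivative has all its nonzero coefficients in exponents divisible by $p$. The paper instead does \emph{not} assume coprimality: it works with the Wronskian $\Wr(g,h)=gh'-g'h$, observes the invariance $\Wr(ag+bh,cg+dh)=(ad-bc)\Wr(g,h)$ for $a,b,c,d\in\F_p(t^p)$, and runs a Euclidean-type degree reduction on the pair $(g,h)$ (using that $\Wr(g,h)=0$ forces $p\mid(\deg g-\deg h)$ at each stage) until one polynomial vanishes, thereby extracting a common divisor $j$ with $g/j,\,h/j\in\F_p[t^p]$. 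Your argument is more economical and relies only on standard unique-factorization facts; the paper's argument is constructive about the common factor and avoids invoking coprimality at the outset, but both establish exactly the lemma. One small point in your favor: the paper's intermediate formula $f'=\Wr(g,h)/g^2$ has a typo (the denominator should be $h^2$, up to sign), whereas your computation $g'h-gh'=0$ is stated cleanly. No gaps in your argument.
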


\begin{proof}
For $g,h\in \F_p(t)$ denote
\bea
\Wr(g,h) = gh\rq{}-g\rq{}h\,,
\eea
the Wronskian of $g,h$. If $a,b,c,d\in \F_p(t^p)$, then
\bean
\label{Wr lin}
\Wr (ag+bh, cg+dh) \,=\, (ad-bc)\, \Wr(g,h)\,.
\eean

If $f=g/h$ with $g, h\in \F_p[t]$, then $f\rq{} = \Wr(g,h)/g^2 $ and 
$\Wr(g,h) = 0$.

Using \Ref{Wr lin}  we show that there exists $j(t) \in \F_p[t]$ and $a,b\in \F_p(t^p)$ such that
$g=aj$, $h=bj$ and hence $g/h=a/b \in \F_p(t^p)$. 

Indeed, let $g(t) = g_l t^l + \dots + g_0$, $g_i\in \F_p$, $g_l\ne 0$, and
$h(t) = h_m t^m + \dots + h_0$, $h_i\in \F_p$, $h_m\ne 0$. Equation $\Wr(g,h)=0$ implies that $p\,|\,(l-m)$.

If $l\geq m$, consider the polynomial $\tilde g(t) = g(t) - t^{l-m}\frac{g_l}{h_m} h(t)$.  Then 
$\deg \tilde g(t) < \deg g(t)$ and 
$\Wr(\tilde g, h) = \Wr(g,h)$. 

If $m\geq l$, consider the polynomial $\tilde h(t) = h(t) - t^{m-l}\frac{h_m}{g_l} g(t)$.  Then 
$\deg \tilde h(t) < \deg h(t)$ and $\Wr(g, \tilde h) = \Wr(g,h)$. 

We  iterate this procedure, which decreases the degree 
of one of the two polynomials $h, g$  by a multiple of $p$ while keeping the Wronskian of the two polynomials
equal to zero.  The procedure will stop when one of the polynomials becomes zero. That means that the last nonzero polynomial $j(t) \in \F_p[t]$
divides the initial polynomials $g(t), h(t)$ and the ratios $a(t):=g(t)/j(t)$ and $b(t):=h(t)/j(t)$ belong to $\F_p[t^p]$. 
\end{proof}

\begin{cor} 

Let $f(u_1,\dots,u_r) \in \F_p(u_1,\dots,u_r)$ be
 such that $\frac{\der f}{\der u_i} =0$, $i=1,\dots,r$. Then $f(u_1,\dots,u_r)\in\F_p(u_1^p,\dots,u_r^p)$.
\qed
\end{cor}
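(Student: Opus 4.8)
The plan is to reduce the multivariable statement to the one-variable Lemma~\ref{lem der=0} by an induction on the number $r$ of variables, peeling off one variable at a time. First I would treat $f$ as a rational function in the single variable $u_r$ with coefficients in the field $\F_p(u_1,\dots,u_{r-1})$; since $\F_p(u_1,\dots,u_{r-1})$ is a field of characteristic $p$, Lemma~\ref{lem der=0} applies verbatim with that field playing the role of $\F_p$ (the proof of the lemma only uses that the coefficients live in a field of characteristic $p$, so that $\frac{d}{dt}t^p=0$ and the degree-reduction argument goes through). The hypothesis $\frac{\der f}{\der u_r}=0$ then gives $f\in \F_p(u_1,\dots,u_{r-1})(u_r^p)$.

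Next I would eliminate the denominators in $u_r^p$. Write $f = g/h$ with $g,h$ polynomials in $u_r^p$ over $\F_p(u_1,\dots,u_{r-1})$; equivalently, introduce the new variable $w=u_r^p$ and regard $f$ as a rational function in $w$ over $\F_p(u_1,\dots,u_{r-1})$. Applying $\frac{\der}{\der u_i}$ for $i<r$ and using that differentiation with respect to $u_i$ commutes with the substitution $w=u_r^p$ (the chain rule contributes nothing since $\frac{\der w}{\der u_i}=0$), the hypothesis $\frac{\der f}{\der u_i}=0$ says that, as a rational function in $w$ over $\F_p(u_1,\dots,u_{r-1})$, all partial derivatives of $f$ in $u_1,\dots,u_{r-1}$ vanish. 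By the inductive hypothesis (the statement for $r-1$ variables, with coefficient field $\F_p(w)$), we conclude $f\in \F_p(w)(u_1^p,\dots,u_{r-1}^p)=\F_p(u_1^p,\dots,u_r^p)$, which is exactly the claim for $r$ variables. The base case $r=1$ is precisely Lemma~\ref{lem der=0}.

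The only point requiring a little care—and the main obstacle, such as it is—is the bookkeeping of which field is serving as the coefficient field at each stage of the induction, and checking that Lemma~\ref{lem der=0} and its proof are genuinely insensitive to replacing $\F_p$ by an arbitrary extension field of characteristic $p$. Concretely, in the Wronskian argument one needs: (i) $p\mid(l-m)$ whenever $\Wr(g,h)=0$ for polynomials of degrees $l,m$ over such a field, which holds because the leading term of $\Wr(g,h)$ is $(m-l)g_lh_m u_r^{l+m-1}$ and the field has characteristic $p$; and (ii) that the iterated degree reduction terminates with a common divisor $j$ such that $g/j,h/j\in(\text{field})[u_r^p]$. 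Both are immediate. An alternative, if one prefers to avoid the ``Lemma over an arbitrary field'' phrasing, is to clear denominators globally: write $f=g/h$ with $g,h\in\F_p[u_1,\dots,u_r]$ coprime, deduce from $\Wr_{u_i}(g,h)=0$ for each $i$ that each $u_i$ appears in $g$ and in $h$ only through $u_i^p$ (running the single-variable degree-reduction argument in the variable $u_i$ with the other variables frozen), and hence $g,h\in\F_p[u_1^p,\dots,u_r^p]$, giving $f\in\F_p(u_1^p,\dots,u_r^p)$ directly. I would present the inductive version, as it is shortest.
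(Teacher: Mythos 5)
Your proof is correct. The paper gives no argument for this corollary at all --- it is stated with a qed symbol as an immediate consequence of Lemma~\ref{lem der=0} --- and your variable-by-variable induction, together with the check that the Wronskian argument of Lemma~\ref{lem der=0} is valid over an arbitrary coefficient field of characteristic $p$ (the point that genuinely needs verifying), is exactly the standard way to fill in that omitted step.
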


\subsection{Vector subspace $\Mz$}

Let
\bea
\mc M_\Fz\,: =\,\mc M\ox_{\F_p[z^p]} \Fpz \ \subset \  \Fpz^n\,,
\eea
be the $\Fz$-vector subspace of $\Fpz^n$ spanned by arithmetic solutions.

\vsk.2>

Recall that the {\it KZ connection} is defined by the commuting differential operators
$\nabla_i$, $i=1,\dots,n$,
\bean
\label{nubla}
\nabla_i :=  \frac{\partial }{\partial z_i} -
   {\frac 1q} \sum_{j \ne i}
   \frac{\Omega_{ij}}{z_i - z_j} \ :\  \Fpz^n\ \to\  \Fpz^n\,.
\eean
The arithmetic solutions $P^{lp-1}(z)$ are flat sections of the KZ connection.

\begin{lem}
\label{lem KZ inv}
The subspace $\Mz$ is invariant with respect to the KZ connection, namely,
for any element $I = \sum_{l=1}^{ak} c_l(z) P^{lp-1}(z)$, $c_l(z) \in \Fz$,  of $\Mz$ we have
\bea
\nabla_i I \ =\  \sum_{l=1}^{ak} \,\frac {\der c_l}{\der z_i}(z)\, P^{lp-1}(z)\,\in \, \Mz\,,
\qquad i=1,\dots,n\,.
\eea
\qed
\end{lem}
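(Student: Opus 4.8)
Lemma \ref{lem KZ inv} is a direct computation, so the plan is to reduce the statement to three elementary facts: (i) each arithmetic solution $P^{lp-1}(z)$ is a flat section, i.e. $\nabla_i P^{lp-1}(z) = 0$ for all $i$; (ii) the operators $\nabla_i$ act on $\Fz^n$ and are $\Fz$-linear in the sense that $\nabla_i(c\,v) = \frac{\der c}{\der z_i}v + c\,\nabla_i v$ for a scalar $c\in\Fz$ and $v\in\Fz^n$ (the Leibniz rule, since $\nabla_i$ is a first-order operator whose zeroth-order part $-\frac1q\sum_{j\ne i}\Omega_{ij}/(z_i-z_j)$ commutes with multiplication by $c$); and (iii) the resulting sum $\sum_l \frac{\der c_l}{\der z_i}(z)\,P^{lp-1}(z)$ again lies in $\Mz$ by the very definition of $\Mz$ as the $\Fz$-span of the $P^{lp-1}(z)$.

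Concretely, first I would note that fact (i) is exactly the content of Theorem \ref{thm Fp} combined with the remark, just after the definition of $\nabla_i$ in \eqref{nubla}, that the $P^{lp-1}(z)$ are flat sections of the KZ connection; one should only check that being a solution of system \eqref{KZ} — a column vector $I(z)$ with $\partial I/\partial z_i = \frac1q\sum_{j\ne i}\Omega_{ij}(z_i-z_j)^{-1}I$ and $\sum I_j = 0$ — is precisely the statement $\nabla_i I = 0$ for all $i$, together with membership in the hyperplane $\{\,v : v_1+\dots+v_n=0\,\}$, which is preserved since each $\Omega_{ij}$ maps $\Fz^n$ into that hyperplane (each row of $\Omega_{ij}$ sums to zero). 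Next, for a general element $I = \sum_{l=1}^{ak} c_l(z)P^{lp-1}(z)$ with $c_l\in\Fz$, I apply $\nabla_i$ termwise, use the Leibniz rule of fact (ii) on each summand, and invoke fact (i) to kill the terms $c_l(z)\,\nabla_i P^{lp-1}(z)$, leaving $\nabla_i I = \sum_{l=1}^{ak}\frac{\der c_l}{\der z_i}(z)\,P^{lp-1}(z)$. Since each $\frac{\der c_l}{\der z_i}$ is again an element of $\Fz$, this expression is an $\Fz$-linear combination of the $P^{lp-1}(z)$, hence lies in $\Mz$; by Theorem \ref{thm li in} (freeness of $\mc M$) the representation is in fact unique, but uniqueness is not needed for the invariance claim.

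There is essentially no obstacle here: the only point requiring a word of care is the Leibniz rule in characteristic $p$, but it holds verbatim because $\der z_i^p/\der z_j = 0$ plays no role in this particular step — $c_l(z)$ is an arbitrary rational function in $\Fz$, not a $p$-th power, and $\frac{\der}{\der z_i}$ is an ordinary derivation on $\Fz$ satisfying $\frac{\der}{\der z_i}(cd) = \frac{\der c}{\der z_i}d + c\frac{\der d}{\der z_i}$. (The hypothesis that $\mc M$ is a module over $\F_p[z^p]$ was what made the \emph{constant} coefficients $c_l\in\F_p[z^p]$ give genuine flat sections; here, after tensoring up to $\Fz$, we allow arbitrary coefficients and pay the price of a nonzero derivative term, which is exactly the term that appears on the right-hand side.) Thus the lemma follows immediately, and the displayed formula in the statement is the whole proof.
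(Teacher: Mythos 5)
Your proof is correct and is exactly the argument the paper leaves implicit (the lemma is stated with an immediate \qed and no written proof): flatness of the $P^{lp-1}(z)$ plus the Leibniz rule for $\nabla_i$ gives the displayed formula, and the result visibly lies in the $\Fz$-span. Your added remarks — that the zeroth-order part of $\nabla_i$ preserves the hyperplane $\sum_k v_k=0$ and that the Leibniz rule is unaffected by characteristic $p$ — are accurate and only make explicit what the author took for granted.
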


\begin{lem}
Let $I^1(z),\dots, I^l(z)$ be flat sections of the KZ connection linearly independent over the field
$\F_p(z^p)$. Then the $\Fz$-vector subspace of $\Fz^n$ spanned by $I^1(z),\dots, I^l(z)$ is of dimension $l$.

\end{lem}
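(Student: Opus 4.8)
The plan is to argue by contradiction, using the $\F_p$-linear independence of $I^1(z),\dots,I^l(z)$ over $\F_p(z^p)$ together with Lemma~\ref{lem der=0} (and its corollary). Suppose the $\Fz$-span of $I^1(z),\dots,I^l(z)$ has dimension strictly less than $l$; then there is a nontrivial relation $\sum_{s=1}^l f_s(z)\, I^s(z) = 0$ with $f_s(z)\in\Fz$ not all zero. Among all such relations choose one with the smallest number of nonzero coefficients; after relabelling, say $f_1,\dots,f_m$ are nonzero and $f_{m+1}=\dots=f_l=0$, and by dividing through we may assume $f_m = 1$. I want to show every $f_s$ lies in $\F_p(z^p)$, which contradicts the hypothesis that the $I^s$ are linearly independent over that field.

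The key step is to differentiate the relation. Applying $\nabla_i$ to $\sum_{s=1}^m f_s(z)\,I^s(z)=0$ and using that each $I^s$ is a flat section (so $\nabla_i I^s = \der_i I^s - \tfrac1q\sum_{j\ne i}\tfrac{\Om_{ij}}{z_i-z_j}I^s = 0$, whence $\nabla_i(f_s I^s) = (\der_i f_s)\,I^s$), we obtain
\bea
\sum_{s=1}^m \frac{\der f_s}{\der z_i}(z)\, I^s(z) \,=\, 0\,, \qquad i=1,\dots,n\,.
\eea
Since $f_m=1$ we have $\der f_m/\der z_i = 0$, so this is a relation among $I^1,\dots,I^{m-1}$ only, i.e. a relation with strictly fewer nonzero coefficients than our minimal one. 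By minimality it must be the trivial relation, so $\der f_s/\der z_i = 0$ for all $s=1,\dots,m$ and all $i=1,\dots,n$.

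Now the corollary to Lemma~\ref{lem der=0} applies: each $f_s(z)\in\Fz$ with all partial derivatives vanishing lies in $\F_p(z^p)$. Thus $\sum_{s=1}^m f_s(z)\,I^s(z)=0$ is a nontrivial $\F_p(z^p)$-linear relation among $I^1(z),\dots,I^l(z)$, contradicting their assumed linear independence over $\F_p(z^p)$. Hence the $\Fz$-span has dimension exactly $l$ (it is at most $l$ trivially). The only delicate point is the bookkeeping in the minimal-counterexample argument — making sure the differentiated relation is genuinely shorter, which is exactly why we normalize one coefficient to $1$ before differentiating — but this is routine; the substantive input is the corollary to Lemma~\ref{lem der=0}, already established.
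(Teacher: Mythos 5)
Your proof is correct and follows essentially the same route as the paper: normalize one coefficient of a hypothetical nontrivial $\Fz$-relation to $1$, apply $\nabla_i$ and use flatness to kill all the coefficients' derivatives, then invoke the corollary of Lemma~\ref{lem der=0} to land the relation in $\F_p(z^p)$, a contradiction. The only cosmetic difference is that you organize the descent via a minimal-support counterexample while the paper runs an induction on the number of sections; these are interchangeable.
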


\begin{proof}
The proof is by induction. The statement is true for $l=1$. Assume that it is true for all $l< m$ and prove it for $l=m$.
Assume that  $I^1(z),\dots, I^m(z)$ are linearly independent over $\F_p(z^p)$ but  dependent over $\F_p(z)$:
\bean
\label{ldp}
c_1(z) I^1(z) + \dots + c_m(z) I^m(z) =0
\eean
 for some $c_j(z) \in \F_p(z)$ not all equal to zero. 
If at least one of the coefficients $c_j(z)$ is zero, then \Ref{ldp} contradicts to
the induction assumption. Hence all $c_j(z)$ are 
nonzero. Divide \Ref{ldp} by $c_1(z)$ and obtain the new relation of the form
\bean
\label{ldp2}
I^1(z) + c_2(z) I^2(z)+\dots + c_m(z) I^m(z) =0 \,.
\eean
Apply $\nabla_i$ to \Ref{ldp2} and obtain
\bean
\label{ldp3}
\frac{\der c_2}{\der z_i} I^2(z)+\dots + \frac{\der c_n}{\der z_i} I^m(z) =0 \,.
\eean
If  $\frac{\der c_j}{\der z_i}\ne 0$ for some $i$ and $j$, then equation \Ref{ldp3} 
for that $i$ contradicts to the induction assumption.
If $\frac{\der c_j}{\der z_i}= 0$ for all $i,j$, then $c_j(z) \in \F_p(z^p)$ 
for all $j$ by Lemma \ref{lem der=0}. Hence \Ref{ldp} gives a linear dependence of
$I^1(z),\dots, I^m(z)$  over $\F_p(z^p)$, which contradicts to the assumptions of 
the lemma.
\end{proof}

\begin{cor}
\label{cor dim}
We have $\dim_{\Fz} \,\Mz \ =\  \on{rank}_{\F_p[z^p]} \mc M\,=\, ak$\,.

\end{cor}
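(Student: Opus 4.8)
The plan is to deduce Corollary \ref{cor dim} directly from the two preceding results in this subsection. First I would recall that by Theorem \ref{thm li in} the module $\mc M$ is free over $\F_p[z^p]$ of rank $ak$, so it has a basis $P^{lp-1}(z)$, $l=1,\dots,ak$, which in particular are linearly independent over $\F_p[z^p]$, hence over its fraction field $\F_p(z^p)$.

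Next I would invoke the Lemma immediately above (on flat sections linearly independent over $\F_p(z^p)$): since the $P^{lp-1}(z)$ are flat sections of the KZ connection by Theorem \ref{thm Fp}, and they are linearly independent over $\F_p(z^p)$, the $\F_p(z)$-vector subspace of $\F_p(z)^n$ that they span has dimension exactly $ak$. By definition this span is $\Mz = \mc M\ox_{\F_p[z^p]}\F_p(z)$, so $\dim_{\F_p(z)}\Mz = ak = \on{rank}_{\F_p[z^p]}\mc M$, which is the assertion.

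There is essentially no obstacle here: the work has all been done in Theorem \ref{thm li in} and the Lemma on flat sections, and the corollary is just the combination of the two (with Lemma \ref{lem der=0} entering only through the proof of that Lemma). The one small point worth stating explicitly is that the passage from independence over $\F_p[z^p]$ to independence over $\F_p(z^p)$ is harmless: clearing denominators shows a nontrivial relation over $\F_p(z^p)$ would yield one over $\F_p[z^p]$. Everything else is the tensor-product identification $\mc M\ox_{\F_p[z^p]}\F_p(z)\subset\F_p(z)^n$ together with the already-established dimension count.
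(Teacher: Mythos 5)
Your proposal is correct and is exactly the intended derivation: the paper states the corollary without proof precisely because it follows by applying the preceding lemma to the flat sections $P^{lp-1}(z)$, whose $\F_p(z^p)$-linear independence comes from Theorem \ref{thm li in}. Your explicit remark about clearing denominators to pass from independence over $\F_p[z^p]$ to independence over $\F_p(z^p)$ is a small but worthwhile addition that the paper leaves implicit.
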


\subsection{Annihilator of $\Mz$}
Consider the $\Fz$-vector space $(\Fz^n)^*$ dual to $\Fz^n$ and the subspace
\bea
\on{Ann}(\Mz) =\{ c\in (\Fz^n)^*\ |\  \langle c, I\rangle=0\ \forall I\in\Mz\}\,.
\eea
This subspace is  identified with the subspace
\bean
\label{cP}
\Big\{(c_1(z),\dots, c_n(z))\in\Fz^n\ \Big|\
\sum_{j=1}^n c_j(z) P^{lp-1}_j(z) = 0\,,\ \  l=1,\dots,ak\Big\},
\eean
where
$P^{lp-1}(z) = (P^{lp-1}_1(z),\dots, P^{lp-1}_n(z))$, $l=1,\dots, ak$, are arithmetic solutions.

\vsk.2>
We have
\bean
\label{dim Ann}
\dim_\Fz \on{Ann}(\Mz)  = n-ak = (q-a)k  +1\,,
\eean
by Corollary \ref{cor dim}.

\begin{thm}
\label{thm 1}
We have $(c_1(z), \dots,c_n(z))\in\on{Ann}(\Mz)$
if and only if
 there exists a polynomial
$Q(x,z) \in \F_p(z)[x]$ such that 
\bean
\label{derr}
\frac{\der Q}{\der x}(x,z)\, =\, \sum_{j=1}^n\,c_j(z)\, \frac {\Phi(x,z)}{x-z_j}\,.
\eean
\end{thm}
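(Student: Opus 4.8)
The plan is to connect the condition that a vector $(c_1(z),\dots,c_n(z))$ annihilates every arithmetic solution $P^{lp-1}(z)$ with the condition that the one-variable rational function
\[
R(x,z)\,:=\,\sum_{j=1}^n c_j(z)\,\frac{\Phi(x,z)}{x-z_j}\,\in\,\F_p(z)[x]
\]
is an exact derivative, i.e.\ lies in the image of $\der/\der x$ acting on $\F_p(z)[x]$. The key observation is that $\Phi(x,z)=\prod_i(x-z_i)^M$ is a polynomial in $x$, so each $\Phi(x,z)/(x-z_j)$ is a polynomial in $x$ of degree $nM-1$; hence $R(x,z)$ is a polynomial in $x$ of degree at most $nM-1$, and by the definition \Ref{P} of $P^i(z)$ its coefficients are exactly $\sum_j c_j(z) P^i_j(z)$. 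So $R(x,z)=\sum_{i} \big(\sum_j c_j(z) P^i_j(z)\big)x^i$.

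First I would recall the elementary fact that a polynomial $\sum_{i\ge 0} b_i x^i\in\F_p(z)[x]$ is of the form $\der Q/\der x$ for some $Q\in\F_p(z)[x]$ if and only if $b_i=0$ for every $i$ with $p\mid (i+1)$, equivalently $b_{lp-1}=0$ for all $l\ge 1$; this is because $\der(x^{m+1})/\der x=(m+1)x^m$, which vanishes in $\F_p$ exactly when $p\mid (m+1)$, and otherwise $m+1$ is invertible so $x^m$ is hit. Applying this to $R(x,z)$: the existence of $Q$ with $\der Q/\der x=R$ is equivalent to the vanishing of the coefficient of $x^{lp-1}$ in $R(x,z)$ for every $l\ge 1$, that is, $\sum_{j=1}^n c_j(z) P^{lp-1}_j(z)=0$ for every $l\ge 1$.

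It then remains to see that it suffices to impose these equations for $l=1,\dots,ak$ rather than for all $l\ge 1$. This is exactly the degree bound already recorded in the text: since $R(x,z)$ has degree at most $nM-1$, the coefficient of $x^{lp-1}$ automatically vanishes once $lp-1>nM-1$, and the condition $0<lp-1\le nM-1$ forces $l=1,\dots,ak$ by \cite[Lemma 5.1]{SliV2}. Hence the condition $\der Q/\der x=R$ is equivalent to $\sum_j c_j(z)P^{lp-1}_j(z)=0$ for $l=1,\dots,ak$, which by the identification \Ref{cP} of $\on{Ann}(\Mz)$ is precisely the statement $(c_1(z),\dots,c_n(z))\in\on{Ann}(\Mz)$.

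I do not anticipate a serious obstacle here; the proof is essentially a reorganization of the definitions together with the characteristic-$p$ description of exact polynomial differentials. The one point that needs a little care is making sure the coefficients genuinely lie in $\F_p(z)$ (not merely in some extension) and that the degree bookkeeping matches the index range $l=1,\dots,ak$; both are already handled in the setup and in Theorems~\ref{thm Fp} and~\ref{thm li in} and the cited lemma, so the argument is short. If one wanted $Q$ itself described explicitly, one could write $Q(x,z)=\sum_{i:\,p\nmid(i+1)} \frac{1}{i+1}\big(\sum_j c_j(z)P^i_j(z)\big)\,x^{i+1}$, but for the equivalence asserted in the theorem this is not needed.
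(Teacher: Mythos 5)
Your proof is correct and follows essentially the same route as the paper: the paper's own two-sentence argument is exactly the observation that $\der Q/\der x=R$ is solvable in $\F_p(z)[x]$ iff the coefficients of $x^{lp-1}$ in $R$ vanish, and that by \Ref{P} and \Ref{cP} these coefficients are $\sum_j c_j(z)P^{lp-1}_j(z)$. You have merely spelled out the characteristic-$p$ exactness criterion and the degree bookkeeping that the paper leaves implicit.
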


\begin{proof} 

The polynomial $Q(x,z)$ with property \Ref{derr} exists if and only if 
the Taylor expansion of the polynomial $\sum_{j=1}^n c_j(z) \frac {\Phi(x,z)}{x-z_j}\,$ 
with respect to $x$ has zero coefficients
for all of the monomials $x^{lp-1}$, $l=1, \dots, ak$. That property is equivalent to the property that 
$(c_1(z), \dots,c_n(z))\in\on{Ann}(\Mz)$\,.
\end{proof}

For $m\in\Z_{\geq 0}$ denote by $\Fzx_{\leq m}$ the $\Fz$-vector space of polynomials in 
$\Fzx$ of degree $\leq m$ in $x$.

\vsk.2> Clearly, $\deg_x\frac {\Phi(x,z)}{x-z_j} = Mn-1$, where $M$ is defined in \Ref{M_i}. 
Hence a polynomial $Q(x,z)$ in Theorem \ref{thm 1}, if exists,
can be chosen to be in $\Fzx_{\leq Mn}$.

Define the map
\bean
\label{der}
\der
\,:\, \Fzx_{\leq Mn}\,\to\,\Fzx_{\leq Mn-1}\,,\qquad Q(x,z) \,\mapsto \frac{\der Q}{\der x}(x,z)\,.
\eean
Define the $n$-dimensional vector space
\bea
\Om_{\log} =\Big\{ \sum_{j=1}^n c_j(z) \frac{\Phi(x,z)}{x-z_j}\ \Big|\ (c_1(z),\dots,c_n(z))\in\Fz^n\Big\},
\eea
the span of the polynomials $\frac{\Phi(x,z)}{x-z_j}$, $j=1,\dots,n$. 

\begin{cor}
\label{cor Ader}
 Theorem \Ref{thm 1} gives an isomorphism
\bean
\label{iso Ader}
\on{Ann}(\Mz)\  \simeq \ \Om_{\log} \cap \der
 \big( \Fzx_{\leq Mn}\big)\,.
\eean
\qed
\end{cor}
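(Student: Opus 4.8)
The plan is to deduce Corollary~\ref{cor Ader} directly from Theorem~\ref{thm 1} together with the degree bound just recorded. First I would unwind the definitions. An element of $\on{Ann}(\Mz)$ is, by the identification \Ref{cP}, a tuple $(c_1(z),\dots,c_n(z))\in\Fz^n$ satisfying the vanishing conditions $\sum_j c_j(z)P_j^{lp-1}(z)=0$ for $l=1,\dots,ak$. To such a tuple associate the polynomial $\omega(x,z):=\sum_{j=1}^n c_j(z)\,\frac{\Phi(x,z)}{x-z_j}\in\Om_{\log}$. This assignment $(c_1,\dots,c_n)\mapsto\omega$ is manifestly linear, and it is injective because the vectors $\frac{\Phi(x,z)}{x-z_j}$, $j=1,\dots,n$, are linearly independent over $\Fz$ (they are the entries of $P(x,z)$ and $\Om_{\log}$ is $n$-dimensional by definition); hence $(c_1,\dots,c_n)\mapsto\omega$ is an $\Fz$-linear isomorphism from $\Fz^n$ onto $\Om_{\log}$, and in particular from $\on{Ann}(\Mz)$ onto its image.

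Next I would identify that image. By Theorem~\ref{thm 1}, the tuple $(c_1(z),\dots,c_n(z))$ lies in $\on{Ann}(\Mz)$ if and only if there is a polynomial $Q(x,z)\in\Fzx$ with $\der Q/\der x=\omega(x,z)$, i.e.\ if and only if $\omega\in\der(\Fzx)$. Since $\deg_x\omega\le Mn-1$ (each summand has $x$-degree $Mn-1$), any antiderivative $Q$ automatically has $\deg_x Q\le Mn$, so $\omega\in\der(\Fzx)$ is equivalent to $\omega\in\der(\Fzx_{\le Mn})$; this is exactly the remark preceding \Ref{der}. Therefore the image of $\on{Ann}(\Mz)$ under $(c_1,\dots,c_n)\mapsto\omega$ is precisely $\Om_{\log}\cap\der(\Fzx_{\le Mn})$, and restricting the isomorphism $\Fz^n\xrightarrow{\ \sim\ }\Om_{\log}$ to $\on{Ann}(\Mz)$ gives the desired isomorphism \Ref{iso Ader}.

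There is essentially no obstacle here: the corollary is a bookkeeping consequence of Theorem~\ref{thm 1} once one observes that the map sending a coefficient vector to the corresponding logarithmic form is a linear isomorphism $\Fz^n\to\Om_{\log}$ and that the degree bound lets one replace $\Fzx$ by $\Fzx_{\le Mn}$. The only point that deserves a sentence of justification is the injectivity of $(c_1,\dots,c_n)\mapsto\omega$, equivalently the linear independence over $\Fz$ of the $n$ rational functions $\frac{\Phi(x,z)}{x-z_j}$ — which is immediate, since they have distinct simple poles at the pairwise distinct points $x=z_j$ (over the field $\Fz$ the $z_j$ are distinct), so no nontrivial $\Fz$-linear combination can vanish. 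With that in hand, the displayed isomorphism \Ref{iso Ader} follows, completing the proof.
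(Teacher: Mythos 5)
Your proposal is correct and is essentially the paper's own (unwritten) argument: the corollary carries a \qed because it is exactly the bookkeeping you describe, namely combining Theorem~\ref{thm 1} with the degree remark preceding \Ref{der} and the observation that $(c_1,\dots,c_n)\mapsto\sum_j c_j(z)\frac{\Phi(x,z)}{x-z_j}$ is a linear isomorphism of $\Fz^n$ onto $\Om_{\log}$. One small imprecision in your injectivity argument: since $M\ge 1$, each $\frac{\Phi(x,z)}{x-z_j}$ is a \emph{polynomial} in $x$ and has no poles; the linear independence follows either by first dividing a vanishing combination by $\Phi(x,z)$ (after which the functions $\frac1{x-z_j}$ do have distinct simple poles), or by invoking Lemma~\ref{lem bases}, which exhibits these polynomials as part of a basis of $\Fzx_{\leq Mn-1}$.
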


\begin{cor}
\label{cor dim cap}

We have
\bean
\label{der dim}
\dim_\Fz\,\Big(\Om_{\log} \cap \der ( \Fzx_{\leq Mn})\Big)\ = \ (q-a)k+1\,.
\eean

\end{cor}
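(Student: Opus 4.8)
The plan is to obtain the dimension directly by combining the isomorphism established in Corollary \ref{cor Ader} with the dimension count for $\on{Ann}(\Mz)$. Indeed, Corollary \ref{cor Ader} identifies the $\Fz$-vector space $\Om_{\log}\cap\der(\Fzx_{\leq Mn})$ with $\on{Ann}(\Mz)$, so it suffices to compute $\dim_\Fz\on{Ann}(\Mz)$.

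For that I would argue as follows. Since $\on{Ann}(\Mz)$ is the annihilator, inside the $n$-dimensional space $(\Fz^n)^*$, of the subspace $\Mz\subset\Fz^n$, we have $\dim_\Fz\on{Ann}(\Mz)=n-\dim_\Fz\Mz$. By Corollary \ref{cor dim}, $\dim_\Fz\Mz=\on{rank}_{\F_p[z^p]}\mc M=ak$. Using the standing hypothesis $n=kq+1$, this gives $\dim_\Fz\on{Ann}(\Mz)=kq+1-ak=(q-a)k+1$, which is exactly the asserted number; this is precisely \Ref{dim Ann}, so the corollary follows at once.

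The only non-formal ingredient here is Corollary \ref{cor dim}, that is, the equality $\dim_\Fz\Mz=\on{rank}_{\F_p[z^p]}\mc M$: one must know that extending scalars from $\F_p[z^p]$ to its fraction field $\Fz$ does not drop the rank. That is where the real content lies, and it is already supplied above — by the preceding lemma, an $\F_p(z)$-linear relation among flat sections that are $\F_p(z^p)$-independent, after applying the connection $\nabla_i$ and invoking Lemma \ref{lem der=0}, forces a contradiction, and together with Theorem \ref{thm li in} (freeness of $\mc M$ of rank $ak$) this yields Corollary \ref{cor dim}. Since all of these are in place, the present statement is a one-line consequence, and I do not expect any genuine obstacle in its proof.
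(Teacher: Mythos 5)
Your proposal is correct and follows exactly the paper's own route: the paper derives Corollary \ref{cor dim cap} from the isomorphism of Corollary \ref{cor Ader} together with the dimension count \Ref{dim Ann}, which rests on Corollary \ref{cor dim} and Theorem \ref{thm li in}, just as you describe. (The paper also later gives an independent proof of \Ref{der dim} via the kernel of $\der|_{\mc Q}$, but that is not the proof of this corollary.)
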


Corollary \ref{cor dim cap} follows from Corollaries \ref{cor Ader}, \ref{cor dim}, and Theorem \ref{thm li in}.

\begin{rem}
The map $\der :  \Fzx_{\leq Mn} \to \Fzx_{\leq Mn-1}$ \,
 in \Ref{der} is a part of the larger De Rham complex
\bean
\label{dr com}
 \Omega \overset{\der}{\lra}\Omega \,,
\eean
where $\Om$ is the $\Fz$-vector space with basis
\bean
\label{def OM}
\phantom{aaa}
\frac{\Phi(x,z)}{(x-z_i)^{m}}\ \  \on{for}\  m\in\BZ_{>0},\ i=1,\dots,n, \qquad\on{and}\qquad
{\Phi(x,z)} x^m\ \ \on{for}\  m\in\BZ_{\geq 0} \,,
\eean 
and $\der = \frac{\der}{\der x}$.

\end{rem}

\section{Spanning elements of $\Om_{\log} \cap \der \big( \Fzx_{\leq Mn}\big)$}
\label{sec sp el}

\begin{lem}
\label{lem bases}
The polynomials
\bean
\label{set sp}
 \frac{\Phi(x,z)}{(x-z_j)^m}\,,\qquad j=1,\dots,n\,,\qquad 0< m\leq M\,,
\eean
form a basis of $\Fzx_{\leq Mn-1}$\,. The polynomials in \Ref{set sp} together with the polynomial
$\Phi(x,z)$ form a basis of $\Fzx_{\leq Mn}$\,.

\end{lem}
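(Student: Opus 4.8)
The plan is to exhibit the proposed list of polynomials as the image of the standard monomial basis under an invertible $\Fz$-linear transformation, and then count dimensions. First I would establish that the $Mn$ polynomials $\Phi(x,z)/(x-z_j)^m$ for $j=1,\dots,n$ and $1\le m\le M$ all lie in $\Fzx_{\le Mn-1}$: each has numerator degree $Mn$ and denominator degree $m\le M$, and since $(x-z_j)^m$ divides $\Phi(x,z)=\prod_i(x-z_i)^M$ whenever $m\le M$, the quotient is a genuine polynomial of degree $Mn-m\le Mn-1$. Since $\dim_\Fz\Fzx_{\le Mn-1}=Mn$, it suffices to prove these $Mn$ polynomials are linearly independent over $\Fz$.

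For linear independence I would argue by looking at poles/orders of vanishing. Suppose
\[
\sum_{j=1}^n\sum_{m=1}^{M} c_{j,m}(z)\,\frac{\Phi(x,z)}{(x-z_j)^m}\;=\;0
\]
with $c_{j,m}(z)\in\Fz$. Divide through by $\Phi(x,z)$ (legitimate in the field of rational functions $\Fz(x)$) to get $\sum_{j,m}c_{j,m}(z)(x-z_j)^{-m}=0$. This is a partial-fraction identity: the functions $(x-z_j)^{-m}$, $j=1,\dots,n$, $1\le m\le M$, are linearly independent over $\Fz$ because the $z_j$ are distinct in $\Fz$ (as indeterminates, $z_i-z_j\ne 0$), so uniqueness of partial fractions forces all $c_{j,m}(z)=0$. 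Hence the $Mn$ polynomials form a basis of $\Fzx_{\le Mn-1}$.

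For the second statement, I would adjoin $\Phi(x,z)$, which has degree exactly $Mn$, to the previous list. Any nontrivial $\Fz$-linear relation among the $Mn+1$ polynomials must involve $\Phi(x,z)$ with nonzero coefficient (otherwise it is a relation among the first $Mn$, already shown trivial), so solving for $\Phi(x,z)$ would express a degree-$Mn$ polynomial as an $\Fz$-combination of polynomials of degree $\le Mn-1$, comparing leading coefficients gives a contradiction. Thus the $Mn+1$ polynomials are linearly independent in $\Fzx_{\le Mn}$, whose dimension is $Mn+1$, so they form a basis.

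\textbf{Main obstacle.} The only genuinely substantive point is the linear independence, and the cleanest route is the reduction to uniqueness of partial fraction decomposition over the field $\Fz$; the mild care needed is to note that $x-z_j$ remains irreducible and the $z_j$ stay pairwise distinct over $\Fz=\F_p(z_1,\dots,z_n)$, so no collisions occur mod $p$. Everything else is a routine degree count.
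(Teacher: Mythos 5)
Your proposal is correct and follows essentially the same route as the paper: a dimension count ($Mn$ polynomials in the $Mn$-dimensional space $\Fzx_{\leq Mn-1}$, then adjoining the degree-$Mn$ polynomial $\Phi$), with linear independence as the only substantive point. The paper simply asserts the linear independence, whereas you justify it via uniqueness of partial fractions after dividing by $\Phi(x,z)$, which is a valid and welcome filling-in of that step.
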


\begin{proof}
Each of the polynomials in \Ref{set sp} lies in $\Fzx_{\leq Mn-1}$. 
They are linearly independent and their number $Mn$ equals
$\dim \Fzx_{\leq Mn-1}$. Hence they form a basis of $\Fzx_{\leq Mn-1}$. Similarly for the second part of the lemma.
\end{proof}

In these bases the map $\der$ defined in \Ref{der} is given by the formulas:
\bean
\label{bbb}
\frac{\der}{\der x} \Big(\Phi(x,z)\Big) 
&=& M\,\sum_{j=1}^n \frac{\Phi(x,z)}{x-z_j}\,,
\\
\label{bff}
\phantom{aaaa}
\frac{\der}{\der x} \Big(\frac{\Phi(x,z)}{(x-z_i)^{m}}\Big) 
&=& (M-m)\,\frac{\Phi(x,z)}{(x-z_i)^{m+1}}
\,-\, M\sum_{l=1}^m\Big(\sum_{j\neq i}\, \frac{1}{(z_j-z_i)^l}\Big)\, 
\frac{\Phi(x,z)}{(x-z_i)^{m+1-l}}
\\
\notag
&&
+\,M\,\sum_{j\neq i}\,\frac{1}{(z_j-z_i)^m}  \,\frac{\Phi(x,z)}{x-z_j}\,,
\eean
for $i=1,\dots,n$\ and \ $m\geq 1$.  In particular for $m=M$ we have
\bean
\label{bffM}
\phantom{aaaa}
\frac{\der}{\der x} \Big(\frac{\Phi(x,z)}{(x-z_i)^{M}}\Big) 
&=& 
-\, M\sum_{l=1}^M\Big(\sum_{j\neq i}\, \frac{1}{(z_j-z_i)^l}\Big)\, 
\frac{\Phi(x,z)}{(x-z_i)^{M+1-l}}
\\
\notag
&&
+\,M\,\sum_{j\neq i}\,\frac{1}{(z_j-z_i)^M}  \,\frac{\Phi(x,z)}{x-z_j}\,.
\eean

\begin{lem}
\label{lem Qi}

For $i=1,\dots,n$, there exist unique $A_{i,1}(z),\dots, A_{i,M-1}(z) \in\F_p(z)$
such that the polynomial
\bean
\label{Qi}
Q_i(x,z)\, \,=  \frac {\Phi(x,z)}{(x-z_i)^M} \,+\,\sum_{m=1}^{M-1}\, A_{i,m}(z)\,\frac {\Phi(x,z)}{(x-z_i)^{m}}\, 
\eean
in $x$ has the property: $\frac{\der Q_i}{\der x} \in \Om_{\log}$, that is,
\bean
\label{property}
\frac{\der Q_i}{\der x}(x,z)\,=\,\sum_{j=1}^n\,B_{i,j}(z)\,\frac{\Phi(x,z)}{x-z_j}\,,
\eean
for suitable $B_{i,j}(z) \in \F_p(z)$.

\end{lem}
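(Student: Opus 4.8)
Plan of proof. The plan is to convert the requirement $\dfrac{\der Q_i}{\der x}\in\Om_{\log}$ into a triangular system of $M-1$ linear equations over $\F_p(z)$ for the $M-1$ unknowns $A_{i,1},\dots,A_{i,M-1}$, and then solve it recursively; the only arithmetic input will be the inequality $0<M<p$.

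First I would note that $Q_i(x,z)\in\Fzx_{\leq Mn-1}$, so by Lemma \ref{lem bases} its derivative has a unique expansion in the basis $\bigl\{\Phi(x,z)/(x-z_j)^m:1\leq j\leq n,\ 0<m\leq M\bigr\}$ of $\Fzx_{\leq Mn-1}$. Applying the formulas \Ref{bff} and \Ref{bffM} to each summand of $Q_i$ in \Ref{Qi}, one reads off that for $j\neq i$ only the basis vector $\Phi/(x-z_j)$ occurs, that for $j=i$ only the vectors $\Phi/(x-z_i)^k$ with $1\leq k\leq M$ occur, and that no $\Phi$-term occurs. Since $\Phi/(x-z_j)\in\Om_{\log}$ for every $j$ (in particular for $j=i$, i.e.\ the case $k=1$), the condition $\frac{\der Q_i}{\der x}\in\Om_{\log}$ is \emph{equivalent} to the vanishing of the coefficient of $\Phi/(x-z_i)^k$ in $\frac{\der Q_i}{\der x}$ for each $k=2,\dots,M$. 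This is exactly $M-1$ scalar equations in the $M-1$ unknowns $A_{i,1},\dots,A_{i,M-1}$, and \Ref{property} holds as soon as they are satisfied.

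Next I would extract the relevant coefficients from \Ref{bff}–\Ref{bffM}: the coefficient of $\Phi/(x-z_i)^k$ in $\frac{\der}{\der x}\bigl(\Phi/(x-z_i)^m\bigr)$ is $M-m$ when $k=m+1$, is $-M\sum_{j\neq i}\frac{1}{(z_j-z_i)^{m+1-k}}$ when $1\leq k\leq m$, and is $0$ when $k>m+1$; while the coefficient of $\Phi/(x-z_i)^k$ in $\frac{\der}{\der x}\bigl(\Phi/(x-z_i)^M\bigr)$ is a fixed element of $\F_p(z)$ furnished by \Ref{bffM}. Hence in the equation indexed by $k$ only the unknowns $A_{i,k-1},A_{i,k},\dots,A_{i,M-1}$ appear, and $A_{i,k-1}$ appears there with coefficient $M-(k-1)$. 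Reading the equations in the order $k=M,M-1,\dots,2$, the $k$-th equation determines $A_{i,k-1}$ as an element of $\F_p(z)$ in terms of the already-found $A_{i,k},\dots,A_{i,M-1}$, provided $M-(k-1)$ is invertible in $\F_p$. By \Ref{a_1}–\Ref{M_i} we have $M=(ap-1)/q$ with $1\leq a<q$, hence $0<M<p$; therefore the pivots $M-(k-1)$ for $k=2,\dots,M$ run through $M-1,M-2,\dots,1$ and are all nonzero in $\F_p$. So the triangular system has a unique solution $A_{i,1},\dots,A_{i,M-1}\in\F_p(z)$, which proves the lemma.

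I expect the main obstacle to be purely the bookkeeping in the third paragraph: correctly reading off from \Ref{bff}–\Ref{bffM} which unknown enters which equation and with which pivot, so that the triangular shape becomes visible and the auxiliary coefficients $B_{i,j}(z)$ are exhibited as the residual $\Phi/(x-z_j)$-coefficients. Once this is organized, the only genuinely arithmetic ingredient — that the pivots $M-1,\dots,1$ are units modulo $p$, which is immediate from $M<p$ — closes the argument.
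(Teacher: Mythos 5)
Your proof is correct and follows essentially the same route as the paper: expand $\frac{\der Q_i}{\der x}$ via \Ref{bff}--\Ref{bffM} in the basis of Lemma \ref{lem bases} and solve the resulting triangular system for $A_{i,1},\dots,A_{i,M-1}$ by back-substitution. The only difference is that you make explicit the invertibility in $\F_p$ of the pivots $M-1,\dots,1$ (via $0<M<p$), a point the paper's proof leaves implicit.
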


\begin{proof}
Let $Q_i(x,z)$ be an expression as in  \Ref{Qi} for some $(A_{i,m}(z))_{m=1}^{M-1}$.  Applying formula \Ref{bff} 
we obtain
\bea
\frac{\der Q_i}{\der x}(x,z) \,
= \, \sum_{m=1}^M\,C_m(z)\, \frac{\Phi(x,z)}{(x-z_i)^m} \,+\,
 \sum_{j\ne i}\,B_{ij}(z)\,\frac{\Phi(x,z)}{x-z_j}\,,
\eea 
where $C_m(z), B_{ij}(z)$ are linear expressions in $A_{i,1}(z),\dots,A_{i,M-1}(z)$ with coefficients in $ \F_p(z)$. We have
\bean
\label{CM}
C_M(z) 
&=& 
A_{i,M-1}(z)\,-\, \sum_{j\neq i}\, \frac{M}{z_i-z_j} \,,
\eean
and for $m=2,\dots, M-1$,
\bean
\label{Ca}
C_m(z)
& =& (M-m+1) A_{i,m-1}(z) \ + \ \dots\,\ ,
\eean
where the dots denote a linear expression in  $A_{i,m}(z), \dots,A_{i,M-1}(z)$.
To obtain property \Ref{property} it is necessary and sufficient to choose
the coefficients  $A_{i,1}(z), \dots,A_{i,M-1}(z)$
so that $C_m(z) = 0$ for $m=2,\dots, M$. But this can be done uniquely by formulas
\Ref{CM} and \Ref{Ca}.
\end{proof}

Denote
\bean
\label{CD}
C_{i,m}(z) = \sum_{j=1, \atop j\neq i}^n\, \frac{M}{(z_j-z_i)^{m}}\,,
\qquad
D_{i,m}(x,z) = \sum_{j=1, \atop j\neq i}^n\, \frac{M}{(z_j-z_i)^{m}}\, \frac{\Phi(x,z)}{x-z_j}\,.
\eean

\begin{thm}
\label{thm SV}
For $i=1,\dots, n$, let $Q_i(x,z)$ be the polynomial determined  in Lemma \ref{lem Qi}.
Then
\bean
\label{Qii}
Q_i(x,z)
&=&
 \sum_{m\geq 0}\, 
\sum_{l_0+\ldots +l_m=M,\atop
l_0,\ldots, l_m>0}
\frac{\Phi(x,z)}{(x-z_i)^{l_0}} 
\,\prod_{r=1}^m \,\frac{C_{i, l_r}(z)}{l_1+\dots+l_r} \,.
\\
\label{Qi rel}
\frac{\der Q_i}{\der x}(x,z) 
& =& 
\sum_{m\geq 0}\, 
\sum_{l_0+\ldots +l_m=M,\atop
l_0,\ldots, l_m>0}
 \left(D_{i, l_0}(x,z) - C_{i,l_0}(z) \frac{\Phi(x,z)}{x-z_i} \right)
\,\prod_{r=1}^m \,\frac{C_{i, l_r}(z)}{l_1+\dots+l_r} \,.
\eean

\end{thm}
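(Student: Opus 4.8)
The plan is to prove the explicit formula \Ref{Qii} by solving the recursion from Lemma \ref{lem Qi} directly, then deduce \Ref{Qi rel} by applying $\der/\der x$ term by term using the formulas \Ref{bff}. Write $Q_i(x,z) = \sum_{m=1}^{M} A_{i,m}(z)\,\frac{\Phi(x,z)}{(x-z_i)^m}$ with the normalization $A_{i,M}(z) = 1$ (so that this agrees with \Ref{Qi}). The defining conditions are $C_m(z) = 0$ for $m = 2,\dots,M$, where $C_m$ is the coefficient of $\frac{\Phi}{(x-z_i)^m}$ in $\der Q_i/\der x$. First I would extract from \Ref{bff} the exact linear relation: the contribution of the $A_{i,m'}$-term to $C_m$ comes only from $m' \geq m-1$, namely $A_{i,m-1}$ enters with coefficient $M-(m-1) = M-m+1$ (the "raising" part $(M-m)\frac{\Phi}{(x-z_i)^{m+1}}$ shifted by one), while the lower terms $A_{i,m'}$ for $m' \geq m$ contribute $-A_{i,m'}\,C_{i,m'-m+1}(z)$ from the sum $-M\sum_l (\sum_{j\neq i}(z_j-z_i)^{-l})\frac{\Phi}{(x-z_i)^{m'+1-l}}$ with $l = m'-m+1$. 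Setting $C_m = 0$ gives the recursion
\[
(M-m+1)\,A_{i,m-1}(z) \;=\; \sum_{m'=m}^{M} A_{i,m'}(z)\,C_{i,m'-m+1}(z), \qquad m = 2,\dots,M.
\]

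Next I would solve this recursion by downward induction on $m-1$ from $M-1$ to $0$... actually upward in the number of factors: unrolling it expresses $A_{i,l_0}(z)$ as a sum over compositions $l_0 + l_1 + \dots + l_m = M$ with all $l_r > 0$ of the product $\prod_{r=1}^{m} \frac{C_{i,l_r}(z)}{l_1 + \dots + l_r}$, which is exactly the coefficient claimed in \Ref{Qii}. The key bookkeeping point is that each application of the recursion strips off one part $l_r$ and divides by $M - (\text{current index}) + 1 = l_1 + \dots + l_r$ (since after removing the top $M - l_0 - \dots$, the running index is $M - (l_1+\dots+l_{r-1})$, so $M - m + 1$ at that stage equals $l_1 + \dots + l_r$). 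I would verify the two boundary cases: $A_{i,M} = 1$ is the $m=0$ (empty composition) term, and $A_{i,M-1} = C_{i,1}(z) = \sum_{j\neq i} M/(z_j - z_i)$, which matches \Ref{CM} with $C_M = 0$.

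For \Ref{Qi rel}, I would apply $\der/\der x$ to the series \Ref{Qii} using \Ref{bff} on each summand $\frac{\Phi}{(x-z_i)^{l_0}}$: the raising term $(M-l_0)\frac{\Phi}{(x-z_i)^{l_0+1}}$ and the two sums. After collecting, the terms with poles only at $x = z_i$ cancel telescopically against each other — this cancellation is guaranteed a priori by Lemma \ref{lem Qi}, which asserts $\der Q_i/\der x \in \Om_{\log}$, so in fact I do not need to check it by hand: it suffices to identify the coefficient of each $\frac{\Phi}{x-z_j}$ with $j \neq i$ and of $\frac{\Phi}{x-z_i}$, and the formula \Ref{Qi rel} is precisely the statement that grouping the $\frac{\Phi}{x-z_j}$ terms (the last line of \Ref{bff}) over all compositions yields $\sum D_{i,l_0}(x,z)\prod(\dots)$, while the $\frac{\Phi}{x-z_i}$ coefficient, obtained by analytic continuation / partial-fractions consistency (using $\sum_j \frac{\Phi}{x-z_j}$-type identities), is $-\sum C_{i,l_0}(z)\prod(\dots)$. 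The main obstacle is purely combinatorial: correctly matching the index shifts in \Ref{bff} so that the denominators $l_1 + \dots + l_r$ come out exactly right, and confirming that the "tail" recursion terminates because $C_{i,m}(z)$ for $m > M$ never appears (it does not, since every part $l_r \leq M$ automatically, as $\sum l_r = M$ with positive parts). I would double-check the whole identity on the first nontrivial case $M = 2$, where \Ref{Qii} reads $Q_i = \frac{\Phi}{(x-z_i)^2} + C_{i,1}(z)\frac{\Phi}{x-z_i}$, against the direct computation from \Ref{bffM}.
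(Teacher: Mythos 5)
Your proposal is correct and takes essentially the same route as the paper: the paper's proof is a one-line instruction to use \Ref{bff} to eliminate the higher-order poles at $x=z_i$ from \Ref{bffM}, which is precisely the triangular recursion $(M-m+1)\,A_{i,m-1}=\sum_{m'\ge m}A_{i,m'}\,C_{i,m'-m+1}$ that you set up and solve by unrolling over compositions of $M$. One minor remark: your boundary value $A_{i,M-1}=C_{i,1}$ is the correct one (consistent with \Ref{bff}, \Ref{bffM} and \Ref{Qii}), even though the paper's displayed formula \Ref{CM} carries a sign typo, $z_i-z_j$ in place of $z_j-z_i$.
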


This theorem is a modification of \cite[Corollary 6.4]{SV3},
where formula \Ref{Qi rel} is considered over $\C$.
Notice that formula (40) in \cite{SV3} has misprints,
cf. formulas (40) and \Ref{Qi rel}.

\begin{proof} Using formula \Ref{bff} we eliminate from formula \Ref{bffM} all
the terms  $\frac{\Phi(x,z)}{(x-z_i)^l}$ with $l>1$. This leads  to formulas \Ref{Qii} and  \Ref{Qi rel}.

For example,
 if $q=2$, $p=5$,  then  $M=(p-1)/2=2$. Formulas \Ref{bffM} and \Ref{bff} take the form
\bea
\frac{\der}{\der x} \Big(\frac{\Phi}{(x-z_i)^{2}}\Big) 
&=&
 - C_{i,1} \frac{\Phi}{(x-z_i)^{2}} -C_{i,2} \frac{\Phi}{x-z_i} + D_{i,2}\,,
\\
\frac{\der}{\der x} \Big(\frac{\Phi}{x-z_i}\Big)
&=&
 \frac{\Phi}{(x-z_i)^{2}} - C_{i,1}\frac{\Phi}{x-z_i} + D_{i,1}\,,
\eea
and imply the formula
\bea
\frac{\der}{\der x}\Big(\frac{\Phi}{(x-z_i)^{2}} + C_{i,1} \frac{\Phi}{x-z_i}\Big) =
 \Big(D_{i,2} - C_{i,2} \frac{\Phi}{x-z_i}\Big) + \Big(D_{i,1} - C_{i,1} \frac{\Phi}{x-z_i}\Big) C_{i,1}\,.
\eea
\end{proof}

\begin{thm}
\label{thm sl2}
Assume that $F(x,z)\in \F_p(z)[x]_{\leq Mn}$   is such that
\bean
\label{dF}
\frac{\der F}{\der x}(x,z)\,=\, \sum_{j=1}^n\,A_j(z)\,\frac{\Phi(x,z)}{x-z_j}
\eean
for some $A_j(z) \in\F_p(z)$. Then there exist unique $D_0(z), D_1(z), \dots,D_n(z) \in \F_p(z)$ such that
\bean
\label{PDder}
F(x,z)\,=\,D_0(z)\Phi(x,z) \,+\,\sum_{j=1}^n D_j(z) Q_j(x,z)\,.
\eean
\end{thm}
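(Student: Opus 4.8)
The plan is to show that the $Q_j$ together with $\Phi$ span the preimage $\der^{-1}(\Om_{\log})\cap\Fzx_{\leq Mn}$ and then count dimensions to get uniqueness. First I would record the obvious fact that $\der\Phi$ and each $\der Q_j$ lie in $\Om_{\log}$, so the right-hand side of \Ref{PDder} always satisfies an equation of the form \Ref{dF}; thus the span of $\Phi,Q_1,\dots,Q_n$ is contained in the space $W:=\{F\in\Fzx_{\leq Mn}\mid \der F\in\Om_{\log}\}$ of polynomials satisfying \Ref{dF}. Conversely, given such an $F$, I would use Lemma \ref{lem bases}: write $F$ in the basis $\{\Phi\}\cup\{\Phi/(x-z_j)^m: 1\le m\le M\}$ of $\Fzx_{\leq Mn}$, say $F=D_0\Phi+\sum_{j,m}E_{j,m}\Phi/(x-z_j)^m$. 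For each fixed $i$, the ``top'' coefficient $E_{i,M}$ is the only source (via \Ref{bffM}) of the term $\Phi/(x-z_i)^{M+1-l}$ with $l\ge 1$, hence the only source of poles of order $>M$ at $x=z_i$ in $\der F$ — wait, rather of terms outside $\Om_{\log}$; so subtracting $\sum_i E_{i,M}Q_i$ from $F$ kills all the order-$M$ coefficients while, by Lemma \ref{lem Qi}, keeping $\der(\,\cdot\,)$ inside $\Om_{\log}$. Iterating downward in $m$ — at each stage the coefficient of $\Phi/(x-z_i)^m$ in the remainder is forced to zero because it is the unique term producing $\Phi/(x-z_i)^{m+1}$ in the derivative, which must vanish for the derivative to stay in $\Om_{\log}$ — one reduces $F$ to a multiple of $\Phi$, establishing \Ref{PDder} with $D_j:=E_{j,M}$ read off at the first step and the remaining $D_0$ at the end.

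For the existence half I would instead argue more cleanly by dimension count, which also yields uniqueness. The space $W$ of $F$ satisfying \Ref{dF} is exactly $\der^{-1}(\Om_{\log})\cap\Fzx_{\leq Mn}$; since $\der:\Fzx_{\leq Mn}\to\Fzx_{\leq Mn-1}$ has one-dimensional kernel (spanned by $1$, or equivalently by $\Phi$ together with the constants — but over $\Fz$ the kernel of $\der$ on polynomials of bounded degree is the constants, so I should be careful: actually $\ker\der$ on $\Fzx_{\le Mn}$ is $\Fz\cdot 1$, which is not $\Fz\cdot\Phi$). Let me instead say: $\dim_\Fz\der^{-1}(\Om_{\log})=\dim_\Fz(\Om_{\log}\cap\der(\Fzx_{\leq Mn}))+\dim_\Fz\ker(\der|_{\Fzx_{\le Mn}})$. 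By Corollary \ref{cor dim cap} the first term is $(q-a)k+1$, and the kernel is $\Fz\cdot 1$, contributing $1$. Hmm, but $1$ need not be expressible via $\Phi$ and the $Q_j$; so the statement as phrased must be using that $\Phi\notin\der(\text{lower degree})$ rather than a clean kernel count. The honest route is the elimination argument of the previous paragraph, and then to note that the $n+1$ elements $\Phi,Q_1,\dots,Q_n$ are linearly independent over $\Fz$ — immediate since $Q_i$ is the unique element of the list with a nonzero $\Phi/(x-z_i)^M$ coefficient and $\Phi$ is the unique one with no poles — so they form a basis of $W$, forcing the coefficients $D_0,\dots,D_n$ in \Ref{PDder} to be unique.

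Thus the concrete plan of steps is: (i) show $\Phi,Q_1,\dots,Q_n\in W$ (trivial, from Lemmas \ref{lem Qi} and formula \Ref{bbb}); (ii) show $\Phi,Q_1,\dots,Q_n$ are $\Fz$-linearly independent (by inspecting the $\Phi/(x-z_i)^M$ and the pole-free coefficients in the basis of Lemma \ref{lem bases}); (iii) show they span $W$ by the downward elimination described above, using at each level that the coefficient of $\Phi/(x-z_i)^{m+1}$ in $\der F$ is the unique obstruction forcing the order-$m$ coefficient of the remainder to be zero; (iv) conclude $\{\Phi,Q_1,\dots,Q_n\}$ is a basis of $W$, whence \Ref{PDder} holds with uniquely determined $D_0,\dots,D_n$. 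I expect step (iii) — the bookkeeping of which coefficient in $\der F$ is controlled by which coefficient of $F$, so that the elimination is genuinely triangular — to be the main obstacle, since formula \Ref{bff} mixes levels $m,m-1,\dots$, and one must check that the relevant ``leading'' interaction is indeed the single term $(M-m)\Phi/(x-z_i)^{m+1}$ (nonzero for $m<M$) plus, at $m=M$, the cross-terms, exactly as encoded in Lemma \ref{lem Qi}; once that triangularity is in hand the rest is formal.
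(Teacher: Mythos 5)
Your proposal is correct and follows essentially the same route as the paper: expand $F$ in the basis of Lemma \ref{lem bases}, subtract $\sum_j c_{j,M}(z)Q_j(x,z)$ to kill the order-$M$ pole coefficients, and then use the triangular structure of formula \Ref{bff} (the nonvanishing leading coefficient $M-m$ for $m<M$, which is nonzero in $\F_p$ since $M<p$) to force the remaining pole coefficients to vanish, leaving $c_0(z)\Phi(x,z)$; uniqueness is the linear independence of $\Phi,Q_1,\dots,Q_n$, which you verify correctly. Your abandoned dimension-count digression is rightly abandoned (in characteristic $p$ the kernel of $\der$ on $\Fzx_{\le Mn}$ is spanned by the $x^{lp}$ and is $(ak+1)$-dimensional, not one-dimensional), but since you do not rely on it the final argument stands.
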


\begin{proof}
We have
\bea
F(x,z) = c_0(z) \Phi(x,z) + \sum_{j=1}^n\sum_{m=1}^M c_{j,m}(z)\frac{\Phi(x,z)}{(x-z_j)^m}\,
\eea
for suitable coefficients $c_0(z), c_{j,m}(z)$ in  $\Fz$. 
Then
\bea
F(x,z) - \sum_{j=1}^n c_{j,M}(z)\,Q_j(x,z) =  
c_0(z) \,\Phi(x,z) + \sum_{j=1}^n\sum_{m=1}^{M-1} \tilde c_{j,m}(z)\frac{\Phi(x,z)}{(x-z_j)^m}\,
\eea
and
\bea
\frac{\der}{\der x}\Big(F(x,z) - \sum_{j=1}^n c_{j,M}(z)\,Q_j(x,z)\Big)\, =\, 
\sum_{j=1}^n\,\tilde A_j(z)\,\frac{\Phi(x,z)}{x-z_j}\,,
\eea
for suitable $\tilde A_j(z)$. 
Then formula \Ref{bff} implies that
\bea
F(x,z)\,-\, \sum_{j=1}^n c_{j,M}\,Q_j(x,z)\, =\, c_0(z)\, \Phi(x,z)\,.
\eea
The theorem is proved.
\end{proof}

\subsection{Formula \Ref{der dim}}
Here is a proof of formula \Ref{der dim} independent of Theorem \ref{thm li in}.

Denote by $\mc Q$ the $n+1$-dimensional $\Fz$-vector space  of linear combinations of the polynomials
$\Phi(x,z)$ and $Q_i(x,z)$, $i=1,\dots,n$. 

By Theorem \ref{thm sl2} 
\bea
\mc Q\,=\,\der^{-1}\Big(\Om_{\log} \cap \der \big( \Fzx_{\leq Mn}\big)\Big) \,.
\eea
 The kernel of the map
\bean
\label{mc Q}
\der|_{\mc Q}\,:\, \mc Q \,\to\, \Om_{\log}
\eean
is the subspace generated by monomials $x^{lp}$  with  $0\leq lp \leq Mn$.
We have
\bea
0\leq l\leq \frac{Mn}p= \frac{ap-1}{pq} (kq+1)= ak +\frac aq - \frac{kq+1}{pq}\,.
\eea
Using the fact that $p>q$, $p> n=kq+1$, $q>a>0$, we conclude that $l=0, 1,\dots, ak$. 
Hence the kernel of the map in \Ref{mc Q} is $ak+1$-dimensional and 
its image $\Om_{\log} \cap \der \big( \Fzx_{\leq Mn}\big)$ is of dimension
$n+1-(ak+1) = (q-a)k+1$. This statement is the statement of formula \Ref{der dim}.

It is easy to see that this independent proof of formula \Ref{der dim} implies  Theorem \ref{thm li in}.

\subsection{Corollary of Theorems \ref{thm 1}, \ref{thm SV},  \ref{thm sl2}}

\begin{cor}
\label{cor SV}
 For any arithmetic solution $P^{mp-1}_{}(z) = (P^{mp-1}_{1}(z),\dots, P^{mp-1}_{n}(z))$,
$m=1,\dots, ak$, we have the relation
\bean
\label{der Phi}
P^{mp-1}_{1}(z)\, +\dots +\, P^{mp-1}_{n}(z) \,=\,0,
\eean
coming from formula \Ref{bbb}
and for any $i=1,\dots,n$ 
the relation
\bean
\label{Qi Rel}
&&
\\
\notag
\sum_{m\geq 0}\, 
\sum_{l_0+\ldots +l_m=M,\atop
l_0,\ldots, l_m>0}\!\!
 \Big(\sum_{j=1, \atop j\neq i}^n\, \frac{M}{(z_j-z_i)^{l_0}}\, P^{mp-1}_j(z) - C_{i,l_0}(z)  P^{mp-1}_i(z) \Big)
\!\prod_{r=1}^m \,\frac{C_{i, l_r}(z)}{l_1+\dots+l_r} \,=\,0\,,
\eean
coming from formula \Ref{Qi rel}. Moreover, these relations generate the space $\on{Ann}(\Mz)$.
\qed

\end{cor}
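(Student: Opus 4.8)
The plan is to read the corollary off the chain of identifications already in place. By Theorem~\ref{thm 1}, a tuple $(c_1(z),\dots,c_n(z))$ lies in $\on{Ann}(\Mz)$ exactly when $\sum_{j=1}^n c_j(z)\frac{\Phi(x,z)}{x-z_j}$ is the $x$-derivative of some $Q(x,z)\in\Fzx_{\leq Mn}$, i.e., by Corollary~\ref{cor Ader}, exactly when this polynomial lies in $\Om_{\log}\cap\der(\Fzx_{\leq Mn})$. Under the identification \Ref{cP}, the statement $(c_1,\dots,c_n)\in\on{Ann}(\Mz)$ is literally the statement $\sum_j c_j(z)P^{mp-1}_j(z)=0$ for all $m=1,\dots,ak$. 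So the task splits into: (i) compute the annihilator tuple attached to $\der\Phi$ and the one attached to each $\der Q_i$, and check they give \Ref{der Phi} and \Ref{Qi Rel}; and (ii) show these $n+1$ tuples span $\on{Ann}(\Mz)$.

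For part (ii) I would invoke Theorem~\ref{thm sl2}: every $F\in\Fzx_{\leq Mn}$ with $\der F\in\Om_{\log}$ is a linear combination of $\Phi(x,z)$ and of $Q_1(x,z),\dots,Q_n(x,z)$, so $\Om_{\log}\cap\der(\Fzx_{\leq Mn})$ is the $\Fz$-span of $\der\Phi$ and the $\der Q_i$. Transporting this span through the isomorphism of Corollary~\ref{cor Ader} shows the corresponding coefficient tuples generate $\on{Ann}(\Mz)$. These $n+1$ tuples are not independent — by \Ref{dim Ann} the annihilator has dimension $(q-a)k+1=n-ak$ — so "generate", not "basis", is the correct word, and nothing more needs to be said.

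For part (i), formula \Ref{bbb} gives $\der\Phi=M\sum_{j=1}^n\frac{\Phi(x,z)}{x-z_j}$, so the associated tuple is $(M,\dots,M)$, proportional to $(1,\dots,1)$; its annihilator condition is exactly \Ref{der Phi}. For $Q_i(x,z)$ from Lemma~\ref{lem Qi}, formula \Ref{Qi rel} of Theorem~\ref{thm SV} writes $\der Q_i$ as $\sum_j c_j(z)\frac{\Phi(x,z)}{x-z_j}$, where the term $D_{i,l_0}(x,z)=\sum_{j\neq i}\frac{M}{(z_j-z_i)^{l_0}}\frac{\Phi(x,z)}{x-z_j}$ supplies the components $c_j$ for $j\neq i$ and the separate term $-C_{i,l_0}(z)\frac{\Phi(x,z)}{x-z_i}$ supplies $c_i$. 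Pairing this tuple with $P^{mp-1}(z)$ and reorganising the double sum over $m\geq 0$ and over $l_0+\dots+l_m=M$ reproduces verbatim the left-hand side of \Ref{Qi Rel}; since the tuple is in $\on{Ann}(\Mz)$, that pairing vanishes, which is \Ref{Qi Rel}.

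Everything hard is already done in Theorems~\ref{thm 1}, \ref{thm SV}, \ref{thm sl2}, so the proof is essentially bookkeeping. The only step that needs genuine care is the term-by-term coefficient matching in \Ref{Qi rel}: because $D_{i,l_0}(x,z)$ ranges over $j\neq i$ only, with the diagonal contribution packaged separately as $-C_{i,l_0}(z)\frac{\Phi(x,z)}{x-z_i}$, one must make sure the $j=i$ component of the annihilator tuple is assembled solely from the second summand and that no $j\neq i$ contribution is dropped when the sums are reordered. Once the tuple is written out explicitly, \Ref{Qi Rel} follows immediately from Theorem~\ref{thm 1}, and the spanning claim from Theorem~\ref{thm sl2}.
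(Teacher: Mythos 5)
Your proposal is correct and follows exactly the route the paper intends: the corollary is stated without a written proof precisely because it is the bookkeeping you describe, reading the relations off Theorem \ref{thm 1} applied to the explicit expansions \Ref{bbb} and \Ref{Qi rel}, and deducing the spanning claim from Theorem \ref{thm sl2} via the isomorphism of Corollary \ref{cor Ader}. Your care about assembling the $j=i$ component solely from the $-C_{i,l_0}(z)\frac{\Phi(x,z)}{x-z_i}$ term is exactly the one point worth checking, and you have it right.
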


\section{The space $\Om_{\log} \cap \der  \big( \Fzx_{\leq Mn}\big)$ and coinvariants mod $p$}
\label{sec ar and rep}

\subsection{Complex $(\Om, \der)$ over $\C$}

Let $\Om_\C$ be the $\C(z)$-vector space with basis
\bean
\label{def Om}
\phantom{aaa}
\frac{\Phi(x,z)}{(x-z_i)^{m}}\ \  \on{for}\  m\in\BZ_{>0},\ i=1,\dots,n, \qquad\on{and}\qquad
{\Phi(x,z)} x^m\ \ \on{for}\  m\in\BZ_{\geq 0} \,,
\eean 
where the polynomial $\Phi(x,z)$ is defined in \Ref{mp red}. 
Consider the new complex 
\bean
\label{dr coC}
 \Omega_\C \overset{\der}{\lra}\Omega_\C \,,
\eean
where $\der=\frac{\der}{\der x}$.  
Complexes \Ref{dr com} and \Ref{def Om} are given by the same formulas but are defined over different fields
$\Fz$ and $\C(z)$, respectively.

\subsection{Lie algebra $\hsl$} 
Let $\sll_2$ be the Lie algebra over $\C(z)$ 
of the  $2\times 2$-matrices with zero trace. Let  $e$,
$f$, $h$ be standard generators subject 
to the relations
$$
[e,f]=h,\qquad [h,e]=2e,\qquad  [h,f]=-2f.
$$
Let $\hsl$ be the  affine Lie algebra 
$\hsl=\sll_2[T,T^{-1}]\oplus\C(z) c$ with the bracket 
$$
[aT^i,bT^j]\ =\ [a,b]T^{i+j}\ +\ i\,\langle a,b\rangle\,\delta_{i+j,0}\, c ,
$$
where $c$ is central element, $\langle a,b\rangle=\tr (ab)$. 
Set 
\bea
e_1
&=& e, 
\qquad\ \
f_1  =f,\qquad \ \ \ \ h_1=h,
\\
e_2&=&fT,
 \qquad  
 f_2=eT^{-1},\quad \ h_2=c-h.
\eea
 These 
are the standard Chevalley generators defining $\hsl$ as the 
Kac-Moody algebra corresponding to the Cartan matrix
$
\left(\begin{array}{cc}
\phantom{a}2&-2
\\
-2&\phantom{a}2
\end{array}\right).$

The Lie algebra $\hsl$ has an 
automorphism $\pi$,
\bean
\label{aut pi}
\pi : \ \  c \mapsto c,
\quad eT^i \mapsto fT^i , 
\quad
fT^i\mapsto eT^i, \quad
hT^i\mapsto -hT^i.
\eean

\subsection{Verma modules} 

We fix $K\in\C$ and assume that the central element $c$ acts on all our representations 
by multiplication by $K$. 

For $L\in\C$, let $V(L,K-L)$ be the $\hsl$ {\em Verma module} with generating vector $v$.
It is the $\C(z)$-vector space  generated by $v$  subject to the relations 
\bea
e_1v=0,\qquad e_2v=0,\qquad  h_1v=L,\qquad  h_2v=(K-L)v\,.
\eea

The Verma module $V(L,K-L)$ is reducible if and only if at least one of the following  relations holds:
\bean
\label{r1}
&&
L-l+1+(m-1)(K+2) = 0\,,
\\
\label{r2}
&&
L+l+1-m(K+2)=0\,,
\\
\label{r3}
&&
K+2=0\,,
\eean
where $l,m\in\Z_{>0}$, see \cite{KK, MFF}.

Let $V(L,K-L)^*$ be the $\C(z)$-vector space dual to $V(L,K-L)$. 
The space $V(L,K-L)^*$ is an $\hsl$-module with the $\hsl$-action defined by the formulas:
\bea
\langle f_i\phi,x\rangle=\langle \phi,e_ix\rangle,
\qquad 
\langle e_i\phi,x\rangle=\langle\phi,f_ix\rangle ,
\eea
 where $\phi\in V(L,K-L)^*,\ x\in V(L,K-L),\ 
i=1,2 $.

\subsection{Lie algebra $\sll_2(z)$} 
\label{conf block}

Let $\sll_2(z)$ be the Lie algebra over $\C(z)$ of the
$\sll_2$-valued rational functions in $x$ of the form
$e\ox u_1 + h\ox u_2+ f\ox u_3$ with  $u_i\in\Om_\C$,
and the bracket is defined by the formula $[x\ox u_1, y\ox u_2] = [x,y]\ox (u_1u_2)$.

Let $W_1,\ldots,W_{n+1}$ be $\hsl$-modules. Then 
$\sll_2(z)$ acts on $W_1\otimes\ldots\otimes W_{n+1}$ by the formula
\bean
\label{action} 
&&
y\ox u(x) \cdot (w_1\otimes\ldots\otimes w_{n+1})
=  \big([y\ox u(x)]^{(z_1)} w_1\big)
\otimes w_2\otimes\ldots\otimes w_{n+1}+ \dots 
\\
\notag
&&
\phantom{aaaaaaaaa}
 +w_1\otimes\ldots\otimes w_{n-1}\otimes \big([y\ox u(x)]^{(z_n)} w_n\big)\otimes 
 w_{n+1}
\\
\notag
&&
\phantom{aaaaaaaaaaaa}
+ w_1\otimes\ldots\otimes w_n\otimes \big(\pi([y\ox u(x)])^{(\infty)} w_{n+1}\big) ,
\eean
where for $y\otimes u(x) \in\sll_2(z)$ the symbol $[y\otimes u(x)]^{(z_j)}$ 
denotes the Laurent expansion of $y\otimes u(x)$ with respect to $x$ at $x=z_j$ for $j=1,\dots,n,$
and $[y\otimes u(x)]^{(\infty)}$ 
denotes the Laurent expansion of $y\otimes u(x)$ with respect to $x$ at $x=\infty$; 
the symbol
$\pi$ in the last term denotes the $\hsl$-automorphism  defined in 
\Ref{aut pi}.

The $\hsl$-action is the map
\begin{equation}
\label{mult}
 \sll_2(z)\otimes \big(\otimes_{j=1}^{n+1} W_j\big)\ \overset{\mu}{\lra} \
\otimes_{j=1}^{n+1} W_j\,,
\end{equation}
denoted by $\mu$.
The quotient space $\big(\otimes_{j=1}^{n+1} W_j\big)\big/\on{Im}(\mu)$ is called the space of {\it coinvariants}
or {\it conformal blocks } at genus 0 with marked points $(x=z_j$, $W_j)$, $j=1,\dots,n$,
$(x=\infty$, $W_{n+1})$. See this construction, for example,  in \cite{FSV1, FSV2, FSV3, SV3, SliV1}.
One defines the KZ connection on the bundle of coinvariants with respect to changing $z_1,\dots,z_n$. Since the objects,
 considered in the previous sections
of this paper, are related to the KZ connection mod $p$ it is not surprising that they are related to the space 
of coinvariants.

\subsection{Complexes \Ref{dr coC} and \Ref{mult}}
Define
\bean
\label{kappa}
&&
\phantom{aaaaaaaaa}
K:=q-2\,,
\\
\label{WW}
&&
\ox_{j=1}^{n+1}W_j \,:=\, \big(V(1-ap, \,q+ap-3)^*\big)^{\ox n}\ox V(n-nap-2,\, q+nap-n)^*\,.
\eean
In \cite{SV3, SliV1}  the  commutative diagram
\begin{equation}
\label{diag}
  \xymatrix@R+0.3em@C+1em{
  \Om_\C \ar[r]^-\der \ar[d]_-{\nu^0} & \Om_\C \ar[d]^-{\nu^1}
 \\
  \sll_2(z)\otimes \big(\otimes_{j=1}^{n+1} W_j\big) \ar[r]_-\mu & \otimes_{j=1}^{n+1} W_j
  }
 \end{equation}
was defined, see there the definition of  $\nu^0, \nu^1$.

\vsk.2>
The Verma module $V(1-ap, q+ap-3)$, whose dual is
used in the definition \Ref{WW}, is reducible, due to equation \Ref{r1} for
\bean
\label{rc}
L=1-ap,\qquad l=1,\qquad
K =q-2,\qquad
m-1 = (ap-1)/q\,.
\eean
For $i=1,\dots,n$,\, let
\bean
\label{QiiC}
Q_{i}(x,z)
&=&
 \sum_{m\geq 0}\, 
\sum_{l_0+\ldots +l_m=M,\atop
l_0,\ldots, l_m>0}
\frac{\Phi(x,z)}{(x-z_i)^{l_0}} 
\,\prod_{r=1}^m \,\frac{C_{i, l_r}(z)}{l_1+\dots+l_r} \, \in \,\C(z)[x]\,.
\eean
This is the same polynomial as in formula \Ref{Qii} but considered over a different field.
This polynomial $Q_{i}(x,z)$ and its derivative   produce elements
\bea
\nu^0(Q_{i}(x,z))\in  \sll_2(z)\otimes \big(\otimes_{j=1}^{n+1} W_j\big)\,,
\qquad
\nu^1\Big(\frac{\der Q_{i}}{\der x}(x,z)\Big)\in  \otimes_{j=1}^{n+1} W_j\,.
\eea
It was explained in \cite{SV3, SliV1} that
 these elements can be constructed purely in terms of action of $\hsl(z)$ on $\otimes_{j=1}^{n+1} W_j$ by
using the fact that the module
$V(1-ap, q+ap-3)$,
 corresponding to the $i$-th factor of this tensor product, is reducible with the reducibility condition
\Ref{rc}. 

\vsk.2>
Summarizing these remarks, we may conclude that the linear relations \Ref{Qi Rel} for the arithmetic solutions of our KZ equations \Ref{KZ} correspond to  diagram \Ref{diag}, in which the affine Lie algebra $\hsl(z)$
is reduced modulo $p$ as well as its Verma modules. 

\begin{rem}
Notice that the Verma module $ V(n-nap-2,\, q+nap-n)$, whose dual is
used in the definition \Ref{WW}, is also reducible, due to equation \Ref{r2} for
\bean
\label{rc}
L=n-nap-2,\qquad l=1,\qquad
K =q-2,\qquad
m = Mn\,,
\eean
cf. \cite[Formula (41)]{SV3}.

\end{rem}

\bigskip

\end{document}